\newcommand{\ubar}[1]{\underaccent{\bar}{#1}}
\newtheorem{theorem}{Theorem}[section]
\newtheorem{lemma}[theorem]{Lemma}
\newtheorem{proposition}[theorem]{Proposition}
\theoremstyle{remark}
\newtheorem{remark}[theorem]{Remark}
\numberwithin{equation}{section}
\DeclareMathOperator{\argmin}{argmin}
\DeclareMathOperator{\tr}{Tr}
\DeclareMathOperator{\dtv}{d_{TV}}
\newcommand{\lip}[1]{[#1]_{\textup{Lip}}}
\providecommand{\keywords}[1]
{
	\textbf{\textit{Keywords--}} #1
}
\providecommand{\MSC}[1]
{
	\textbf{\textit{MSC Classification--}} #1
}
\newcommand{\owntag}[2][\relax]{% \owntag[short label]{tag}
  \ifx#1\relax\relax\def\owntag@name{#2}\else\def\owntag@name{#1}\fi% base label
  \refstepcounter{equation}\tag{\theequation, #2}%
  \expandafter\ltx@label\expandafter{eq:\owntag@name}%
  \def\@currentlabel{\theequation, #2}\expandafter\ltx@label\expandafter{Eq:\owntag@name}%
  \def\@currentlabel{#2}\expandafter\ltx@label\expandafter{tag:\owntag@name}%
}
\title{Convergence of Langevin-Simulated Annealing algorithms with multiplicative noise II: Total Variation}
\author{Pierre Bras\footnote{Sorbonne Universit\'e, Laboratoire de Probabilit\'es, Statistique et Mod\'elisation, UMR 8001, case 158, 4 pl. Jussieu, F-75252 Paris Cedex 5, France. E-mail: \texttt{pierre.bras@sorbonne-universite.fr} and \texttt{gilles.pages@sorbonne-universite.fr}.} \footnote{Corresponding author.} $\ $ and Gilles Pag\`es\footnotemark[1]}
\date{}
\begin{document}

\maketitle

\begin{abstract}
We study the convergence of Langevin-Simulated Annealing type algorithms with multiplicative noise, i.e. for $V : \mathbb{R}^d \to \mathbb{R}$ a potential function to minimize, we consider the stochastic differential equation $dY_t = - \sigma \sigma^\top \nabla V(Y_t) dt + a(t)\sigma(Y_t)dW_t + a(t)^2\Upsilon(Y_t)dt$, where $(W_t)$ is a Brownian motion, where $\sigma : \mathbb{R}^d \to \mathcal{M}_d(\mathbb{R})$ is an adaptive (multiplicative) noise, where $a : \mathbb{R}^+ \to \mathbb{R}^+$ is a function decreasing to $0$ and where $\Upsilon$ is a correction term. Allowing $\sigma$ to depend on the position brings faster convergence in comparison with the classical Langevin equation $dY_t = -\nabla V(Y_t)dt + \sigma dW_t$.
In a previous paper we established the convergence in $L^1$-Wasserstein distance of $Y_t$ and of its associated Euler scheme $\bar{Y}_t$ to $\argmin(V)$ with the classical schedule $a(t) = A\log^{-1/2}(t)$. 
In the present paper we prove the convergence in total variation distance. The total variation case appears more demanding to deal with and requires regularization lemmas.
\end{abstract}

\keywords{Stochastic Optimization, Langevin Equation, Simulated Annealing, Neural Networks}

\MSC{62L20, 65C30, 60H35}

\section{Introduction}

Langevin-based algorithms are used to solve optimization problems in high dimension and have gained much interest in relation with Machine Learning. The Langevin equation is a Stochastic Differential Equation (SDE) which consists in a gradient descent with noise. More precisely, let $V :  \mathbb{R}^d \rightarrow \mathbb{R}^+$ be a coercive potential function, then the associated Langevin equation reads
$$ dX_t = -\nabla V(X_t)dt + \sigma dW_t , \ t \ge 0,$$
where $(W_t)$ is a $d$-dimensional Brownian motion and where $\sigma > 0$. Under standard assumptions, the invariant measure of this SDE is the Gibbs measure $\nu_{\sigma^2}$ of density proportional to $e^{-2V(x)/\sigma^2}$ and for small enough $\sigma$, this measure concentrates around $\argmin(V)$ \cite{dalalyan2014} \cite{bras2021}.
Adding a small noise to the gradient descent allows to explore the space and to escape from traps such as local minima or saddle points appearing in non-convex optimization problems \cite{lazarev1992} \cite{dauphin2014}.
%This noise may also be interpreted as coming from the approximation of the gradient in stochastic gradient descent algorithms.
Such methods have been recently brought up to light again with Stochastic Gradient Langevin Dynamics (SGLD) algorithms \cite{welling2011} \cite{li2015}, especially for the deep learning and the calibration of large artificial neural networks.
%which is a high-dimensional non-convex optimization problem.

The Langevin-simulated annealing SDE is the Langevin equation where the noise parameter is slowly decreasing to $0$, namely
\begin{equation}
\label{eq:intro:1}
dX_t = -\nabla V(X_t) dt + a(t) \sigma dW_t , \ t \ge 0,
\end{equation}
where $a : \mathbb{R}^+ \to \mathbb{R}^+$ is non-increasing and converges to 0.
The idea is that the "instantaneous" invariant measure $\nu_{a(t)\sigma}$ which is the Gibbs measure of density $\propto \exp(-2V(x)/(a(t)^2\sigma^2))$ converges itself to $\argmin(V)$.
%This method indeed shares similarities with the original simulated annealing algorithm \cite{laarhoven1987}, which builds a Markov chain from the Gibbs measure using the Metropolis-Hastings algorithm and where the parameter $\sigma$, interpreted as a temperature, slowly decreases to zero over the iterations.
Although the additive case i.e. where $\sigma$ is constant has been extensively studied, little attention has been paid to the multiplicative case i.e. where $\sigma : \mathbb{R}^d \to \mathcal{M}_d(\mathbb{R})$ depends on $X_t$.

\medskip

The objective of the present paper is to study the convergence in total variation of the Langevin-Simulated annealing SDE, i.e. \eqref{eq:intro:1} with non-constant $\sigma$. Following \cite[Proposition 2.5]{pages2020}, we need to add a correction term in the drift, giving
\begin{align}
& dY_t = -(\sigma \sigma^\top \nabla V)(Y_t)dt + a(t) \sigma(Y_t) dW_t + \left(a^2(t) \left[\sum_{j=1}^d \partial_j(\sigma \sigma^\star)(Y_t)_{ij} \right]_{1 \le i \le d}\right) dt,\\
& a(t) = \frac{A}{\sqrt{\log(t+e)}},
\end{align}
so that $\nu_{a(t)}$ is still the the "instantaneous" invariant measure. We also study the convergence of its Euler-Maruyama scheme $\bar{Y}_t$ with decreasing steps and with noisy gradient estimates coming from stochastic gradient algorithms.
We assume in particular the convex uniformity of the potential $V$ outside a compact set (but we do not assume that the potential is convex) and the ellipticity and the boundedness of $\sigma$.

\bigskip

We studied this SDE and proved the convergence in $L^1$-Wasserstein distance of $Y$ and $\bar{Y}$ to $\nu^\star$ which is the limit measure of $\nu_a$ as $a \to 0$, in a previous paper \cite{bras2021-2}, which the present paper is a companion paper of. More precisely, we proved that $\mathcal{W}_1(Y_t,\nu^\star)$ is of order $a(t)$ as $t \to \infty$ and that $\mathcal{W}_1(Y_t,\nu_{(a(t))})$ is of order $t^{-\alpha}$ for every $\alpha \in (0,1)$.
For more details, we refer to the introduction of \cite{bras2021-2}. In particular, for applications to optimization problems arising in Stochastic Optimization and in Machine Learning and for choices of $\sigma : \mathbb{R}^d \to \mathcal{M}_d(\mathbb{R})$ used by practitioners, we refer to \cite[Section 3]{bras2021-2}.

The proof for the total variation distance case relies on the same strategy developed in \cite{bras2021-2}. We first introduce the process $X$ where the coefficient $(a(t))$ is "by plateaux" i.e. non-increasing and piecewise constant on time intervals $[T_n, T_{n+1}]$. Then we give bounds on $\dtv(X_t, Y_t)$ using a \textit{domino strategy} \cite[(1.2)]{bras2021-2}.
%For a function $f : \mathbb{R}^d \to \mathbb{R}$, the \textit{domino strategy} consists in a step-by-step decomposition of the weak error to produce an upper bound as follows:
%\begin{align}
%| \mathbb{E}f(\bar{X}_{\Gamma_n}^x) - \mathbb{E}f(X_{\Gamma_n}^x)| & = | \bar{P}_{\gamma_1} \circ \cdots \circ \bar{P}_{\gamma_n} f(x) - P_{\Gamma_n}f(x) | \nonumber \\
%% & = \left|\sum_{k=1}^n \bar{P}_{\gamma_1} \circ \cdots \circ \bar{P}_{\gamma_{k-1}} \circ (\bar{P}_{\gamma_k}-P_{\gamma_k})\circ P_{\Gamma_n-\Gamma_k}f(x) \right| \nonumber \\
%\label{eq:domino_strategy}
%& \le \sum_{k=1}^n \left|\bar{P}_{\gamma_1} \circ \cdots \circ \bar{P}_{\gamma_{k-1}} \circ (\bar{P}_{\gamma_k}-P_{\gamma_k})\circ P_{\Gamma_n-\Gamma_k}f(x) \right|,
%\end{align}
%where $P$ and $\bar{P}$ are the transition kernels associated to $X$ and $\bar{X}$ respectively and where $\Gamma_n=\gamma_1+\cdots+\gamma_n$. Then two terms appear: first the "error" term, for large $k$, where the error is controlled by classic weak and strong bounds on the error of an Euler-Maruyama scheme, and the "ergodic" term, for small $k$, where the ergodicity of $X$ is used.
However the main difference with the $L^1$-Wasserstein distance concerns the total variation distance between $X$ and $Y$ in small time as in general, it is more difficult to give bounds in small time for the total variation distance between two processes with close coefficients. Indeed, considering the functional characterization and comparing it with the $L^1$-Wasserstein distance, if $x$ and $y \in \mathbb{R}^d$ are close to each other and if $f:\mathbb{R}^d\to \mathbb{R}$ is Lipschitz-continuous, then we can bound $|f(x)-f(y)|$ by $\lip{f}|x-y|$; however if $f$ is measurable bounded, then we cannot directly bound $|f(x)-f(y)|$ in terms of $|x-y|$. Instead, the common strategy of proof in the literature is to use Malliavin calculus in order to perform an integration by parts and to use bounds on the derivatives of the density.
In this context, \cite{pages2020} relies on a highly technical Malliavin approach inducing a "regularization from the past" (see \cite[Theorem 3.7 and Appendix C]{pages2020}).

We give bounds in small time relying on the recent paper \cite{bras2021-3} and we adapt some of the proofs to the non-homogeneous Markovian setting. These bounds rely on estimates of the density of the solutions to SDE's and their derivatives \cite{friedman}. The strategy of proof is the following: we first reduce to the null drift case using a Girsanov change of measure. Then we introduce an artificial regularization in order to perform a Malliavin-type integration by parts and we use Aronson's bounds on the density and its derivatives; we need to pay attention to the dependency in the parameter $a$, controlling the ellipticity of the SDE and which converges to $0$, of the constants that appear in the Aronson bounds.
Moreover, we rely on \cite{devroye2018} to give bounds on the total variation between two Gaussian laws.

Contrary to the $L^1$-Wasserstein distance, we do not prove the convergence as $t \to \infty$ of $Y_t$ and $\bar{Y}_t$ to $\nu^\star$ since in most of the cases, $\nu^\star$ is supported by a finite number of points and then if $Y_t$ has a density then $\dtv(Y_t, \nu^\star)=2$. Instead, we prove the convergence in total variation of $Y_t$ and $\bar{Y}_t$ to their "instantaneous invariant measure" $\nu_{a(t)}$ which itself converges to $\nu^\star$ (in law, for the $L^1$-Wasserstein distance etc, see for example \cite[Theorem 2.1]{hwang1980} and \cite[Lemma 4.6]{bras2021-2}) and we give bounds on $\dtv(Y_t, \nu_{a(t)})$ and on $\dtv(\bar{Y}_t, \nu_{a(t)})$ as $t \to \infty$.

\bigskip

The paper is organized as follows.
In Section \ref{sec:assumptions_main_results} we give the setting and assumptions of the problem we consider and state our main results of convergence with convergence rates. This setting is the same as in \cite{bras2021-2}. In Section \ref{sec:small_time_bounds} we establish bounds in small time for $\dtv(X_t, Y_t)$ and for $\dtv(X_t, \bar{Y}_t)$, in inspired from \cite{bras2021-3}. In Section \ref{sec:X_convergence}, we prove the convergence of the plateaux SDE $X$ using exponential contraction properties. Using this convergence, the convergences of $\dtv(Y_t, \nu_{a(t)})$ and $\dtv(\bar{Y}_t, \nu_{a(t)})$ are proved in Section \ref{sec:Y_convergence} and \ref{sec:Y_bar_convergence} respectively.

\bigskip

\textsc{Notations}

We endow the space $\mathbb{R}^d$ with the canonical Euclidean norm denoted by $| \boldsymbol{\cdot} |$ and we denote by $\langle \cdot, \cdot \rangle$ the associated canonical inner product. For $x \in \mathbb{R}^d$ and for $R>0$, we denote $\textbf{B}(x,R) = \lbrace y \in \mathbb{R}^d : \ |y-x| \le R \rbrace$.

For $M \in (\mathbb{R}^d)^{\otimes k}$, we denote by $\|M\|$ its operator norm, i.e. $\|M\| = \sup_{u \in \mathbb{R}^{d\times k}, \ |u|=1} M \cdot u$. If $M : \mathbb{R}^d \to (\mathbb{R}^d)^{\otimes k}$, we denote $\|M\|_\infty = \sup_{x \in \mathbb{R}^d} \|M(x)\|$. We say that $M$ is $\mathcal{C}^r_b$ for some $r \in \mathbb{N}\cup\lbrace 0 \rbrace$ if $M$ is bounded and has bounded derivatives up to the order $r$.

For $k \in \mathbb{N}$ and if $f:\mathbb{R}^d \to \mathbb{R}$ is $\mathcal{C}^k$, we denote by $\nabla ^k f : \mathbb{R}^d \rightarrow (\mathbb{R}^d)^{\otimes k}$ its differential of order $k$.
If $f$ is Lipschitz-continuous, we denote by $[f]_{\text{Lip}}$ its Lipschitz constant.

%For a random vector $X$, we denote by $[X]$ its law.

We denote the total variation distance between two distributions $\pi_1$ and $\pi_2$ on $\mathbb{R}^d$:
$$ \textstyle \dtv(\pi_1,\pi_2) = 2 \sup_{A \in \mathcal{B}(\mathbb{R}^d)} |\pi_1(A) - \pi_2(A)|.$$
Without ambiguity, if $Z_1$ and $Z_2$ are two $\mathbb{R}^d$-valued random vectors, we also write $\dtv(Z_1,Z_2)$ to denote the total variation distance between the law of $Z_1$ and the law of $Z_2$.
We have as well
$$ \dtv(\pi_1,\pi_2) = \sup \left\lbrace \int_{\mathbb{R}^d} fd\pi_1 - \int_{\mathbb{R}^d} fd\pi_1, \ f : \mathbb{R}^d \to [-1,1] \ \text{measurable} \right\rbrace .$$
Moreover, we recall that if $\pi_1$ and $\pi_2$ admit densities with respect to some measure reference $\lambda$, then
$$ \dtv(\pi_1,\pi_2) = \int_{\mathbb{R}^d} \left|\frac{d\pi_1}{d\lambda} - \frac{d\pi_2}{d\lambda}\right| d\lambda .$$

We denote the $L^p$-Wasserstein distance between two distributions $\pi_1$ and $\pi_2$ on $\mathbb{R}^d$:
$$ \mathcal{W}_p(\pi_1, \pi_2) = \inf \left\lbrace \left(\int_{\mathbb{R}^d} |x-y|^p \pi(dx,dy) \right)^{1/p} : \ \pi \in \mathcal{P}(\pi_1,\pi_2) \right\rbrace ,$$
where $\mathcal{P}(\pi_1,\pi_2)$ stands for the set of probability distributions on $(\mathbb{R}^d \times \mathbb{R}^d, \mathcal{B}or(\mathbb{R}^d)^{\otimes 2})$ with respective marginal laws $\pi_1$ and $\pi_2$. For $p=1$, let us recall the Kantorovich-Rubinstein representation of the Wasserstein distance of order 1 \cite[Equation (6.3)]{villani2009}:
$$ \mathcal{W}_1(\pi_1,\pi_2) = \sup \left\lbrace \int_{\mathbb{R}^d} f(x) (\pi_1-\pi_2)(dx) : \ f : \mathbb{R}^d \to \mathbb{R}, \ [f]_{\text{Lip}} = 1 \right\rbrace .$$

For $x \in \mathbb{R}^d$, we denote by $\delta_x$ the Dirac mass at $x$.

In this paper, we use the notation $C$ and $c$ to denote real positive constants, which may change from line to line.

\section{Assumptions and main results}
\label{sec:assumptions_main_results}

\subsection{Assumptions}
\label{subsec:assumptions}

Let us briefly recall the setting adopted in \cite{bras2021-2}.
Let $V : \mathbb{R}^d \rightarrow (0,+\infty)$ be a $\mathcal{C}^2$ potential function such that $V$ is coercive and
\begin{equation}
\label{eq:def:A}
(x \mapsto |x|^2 e^{-2V(x)/A^2}) \in L^1(\mathbb{R}^d) \text{ for some } A>0.
\end{equation}
Then $V$ admits a minimum on $\mathbb{R}^d$. Moreover, let us assume that
\begin{align}
V^\star :=\min_{\mathbb{R}^d} V >0, \quad \argmin(V) = \lbrace x_1^\star, \ldots, x_{m^\star}^\star \rbrace, \quad \forall \ i =1,\ldots,m^\star, \ \nabla^2 V(x_i^\star) >0,
\owntag[eq:min_V]{$\mathcal{H}_{V1}$}
\end{align}
i.e. $\min_{\mathbb{R}^d} V$ is attained at a finite number $m^\star$ of points and at each point the Hessian matrix is positive definite. We then define for $a \in (0,A]$ the Gibbs measure $\nu_{a}$ of density :
\begin{equation}
\label{eq:def_nu}
\nu_a(dx) = \mathcal{Z}_{a} e^{-2(V(x)-V^\star)/a^2} dx , \quad \mathcal{Z}_{a} = \left( \int_{\mathbb{R}^d} e^{-2(V(x)-V^\star)/a^2} dx \right)^{-1}
\end{equation}
Following \cite[Theorem 2.1]{hwang1980}, the measure $\nu_a$ converges weakly to $\nu^\star$ as $a \to 0$, where $\nu^\star$ is the weighted sum of Dirac measures:
\begin{equation}
\nu^\star = \left(\sum_{j=1}^{m^\star} \left(\det \nabla^2 V(x_j^\star) \right)^{-1/2} \right)^{-1} \sum_{i=1}^{m^\star} \left(\det \nabla^2 V(x_i^\star)\right)^{-1/2} \delta_{x_i^\star}.
\end{equation}
Following \cite[Lemma 4.6]{bras2021-2}, $\nu_a$ also converges to $\nu^\star$ as $a \to 0$ for the $L^1$-Wasserstein distance.

\medskip

We consider the following Langevin SDE in $\mathbb{R}^d$:
\begin{align}
\label{eq:def_Y}
Y_0^{x_0} = x_0 \in \mathbb{R}^d, \quad dY_t^{x_0} =  b_{a(t)}(Y_t^{x_0})dt + a(t) \sigma(Y_t^{x_0}) dW_t,
\end{align}
where, for $a\ge 0$, the drift $b_a$ is given by
\begin{equation}
\label{eq:def_b}
b_a(x) = -(\sigma \sigma^\top \nabla V)(x) + a^2 \left[\sum_{j=1}^d \partial_j(\sigma \sigma^\top)_{ij}(x) \right]_{1 \le i \le d} =: -(\sigma \sigma^\top \nabla V)(x) + a^2 \Upsilon(x),
\end{equation}
where $W$ is a standard $\mathbb{R}^d$-valued Brownian motion defined on a probability space $(\Omega, \mathcal{A}, \mathbb{P})$, where $ \sigma : \mathbb{R}^d \to \mathcal{M}_{d}(\mathbb{R})$ is $\mathcal{C}^2$ and
\begin{equation}
\label{eq:def_a}
a(t) = \frac{A}{\sqrt{\log(t+e)}} 
\end{equation}
where $A$ is defined in \eqref{eq:def:A} and with $\log(e)=1$.
This equation corresponds to a gradient descent on the potential $V$ with preconditioning $\sigma$ and multiplicative noise ; the second term in the drift \eqref{eq:def_b} is a correction term (see \cite[Proposition 2.5]{pages2020}) which is zero for constant $\sigma$.
%For convenience let us define:
%\begin{align}
%[\Upsilon(x)]_{1 \le i \le d} := \left[\sum_{j=1}^d \partial_i(\sigma \sigma^\top)_{ij}(x)\right]_{1 \le i \le d}.
%\label{eq:def_upsilon}
%\end{align}

\medskip

We make the following assumptions on the potential $V$:
\begin{align}
|\nabla V|^2 \le CV \ \text{ and } \sup_{x \in \mathbb{R}^d} || \nabla^2 V(x)|| < + \infty ,
\owntag[eq:V_assumptions]{$\mathcal{H}_{V2}$}
\end{align}
%\lim_{|x| \rightarrow + \infty} V(x) = + \infty, \ \
which implies in particular that $V$ has at most a quadratic growth.
Let us also assume that
\begin{align}
\sigma \text{ is bounded and Lipschitz-continuous,} \ \nabla^2 \sigma \text{ is bounded}, \ \nabla(\sigma\sigma^\top) \nabla V \text{ is bounded},
\owntag[eq:sigma_assumptions]{$\mathcal{H}_\sigma$}
\end{align}
and that $\sigma$ is uniformly elliptic, i.e.
\begin{equation}
\label{eq:ellipticity}
\exists \ubar{\sigma}_0 > 0, \ \forall x \in \mathbb{R}^d, \ (\sigma \sigma^\top) (x) \ge \ubar{\sigma}_0^2 I_d .
\end{equation}
Assumptions \eqref{Eq:eq:V_assumptions} and \eqref{Eq:eq:sigma_assumptions} imply that $\Upsilon$ is also bounded and Lipschitz-continuous and that $b_a$ is Lipschitz-continuous uniformly in $a \in [0,A]$. Let the minimal constant $[b]_{\text{Lip}}$ be such that:
\begin{equation}
\forall a \in [0,A], \ b_a \text{ is } [b]_{\text{Lip}} \text{-Lipschitz continuous} .
\end{equation}

We make the non-uniform dissipative (or convexity) assumption outside of a compact set: there exists $\alpha_0 >0$ and $R_0 >0$ such that
%\begin{equation*}
%\forall x, y \in \textbf{B}(0,R_0)^c, \ \langle b_u(x)-b_u(y), \ x-y \rangle \le - \alpha |x-y|^2.
%\end{equation*}
\begin{align}
\forall x, y \in \textbf{B}(0,R_0)^c, \ \left\langle \left( \sigma \sigma^\top \nabla V\right)(x)-\left(\sigma \sigma^\top \nabla V \right) (y), \ x-y \right\rangle \ge \alpha_0 |x-y|^2.
\owntag[eq:V_confluence]{$\mathcal{H}_{cf}$}
\end{align}

%\begin{proposition}
%These assumptions imply:
%\begin{enumerate}
%	\item $\forall x \in \mathbb{R}^d$, $V(x) \le C|x-x^\star|^2 + V(x^\star)$.
%	\item $|\nabla V|$ is coercive.
%	\item $\forall x \in \mathbb{R}^d$, $V(x)-V(x^\star) \ge C|x-x^\star|^2$.
%\end{enumerate}
%\end{proposition}
%\begin{proof}
%\begin{itemize}
%	\item We have $|\nabla V|^2 \le CV$ so $\nabla(\sqrt{V}) \le C$.
%	\item Using \eqref{Eq:eq:V_confluence}, for $x$, $y \in \textbf{B}(0,R_0)^c$:
%$$ ||\sigma||_\infty^2 |\nabla V(x)| |x-y| \ge |\langle (\sigma \sigma^\star \nabla V)(x), x-y \rangle| \ge \alpha|x-y|^2 - |\langle (\sigma\sigma^\star \nabla V)(y),x-y \rangle | $$
%so
%$$ |\nabla V(x)| \ge ||\sigma||_\infty^{-2}(\alpha|x-y| - |(\sigma\sigma^\star \nabla V)(y)|) ,$$
%and taking $y$ fixed and $|x| \rightarrow \infty$ yields that $|\nabla V(x)| \rightarrow \infty$.
%	\item Let us define $G := \sigma \sigma^\star V$, then 
%\end{itemize}
%\end{proof}
Taking $y \in \textbf{B}(0,R_0)^c$ fixed, letting $|x| \to \infty$ and using the boundedness of $\sigma$, \eqref{Eq:eq:V_confluence} implies that $|\nabla V|$ is coercive.
%This implies that $|\nabla V|$ is coercive because
%for $x$, $y \in \textbf{B}(0,R_0)^c$:
%\begin{align*}
%& ||\sigma||_\infty^2 |\nabla V(x)| |x-y| \ge |\langle (\sigma \sigma^\star \nabla V)(x), x-y \rangle| \ge \alpha|x-y|^2 - |\langle (\sigma\sigma^\star \nabla V)(y),x-y \rangle | \\
%& |\nabla V(x)| \ge ||\sigma||_\infty^{-2}(\alpha|x-y| - |(\sigma\sigma^\star \nabla V)(y)|) .
%\end{align*}
Using \eqref{Eq:eq:V_assumptions} and the boundedness of $\sigma$, there exists $C>0$ (depending on $A$) such that:
$$ \forall a \in [0,A], \ 1 + |b_a(x)| \le CV^{1/2}(x) .$$

\medskip

Let $(\gamma_n)_{n \ge 1}$ be a non-increasing sequence of varying positive steps. We define $\Gamma_n := \gamma_1 + \cdots + \gamma_n$ and for $t \ge 0$:
\begin{equation}
N(t) := \min \lbrace k \ge 0 : \ \Gamma_{k+1} > t \rbrace = \max \lbrace k \ge 0 : \ \Gamma_k \le t \rbrace .
\end{equation}
We make the classical assumptions on the step sequence, namely
\begin{equation}
\gamma_n \downarrow 0, \quad \sum_{n \ge 1} \gamma_n = + \infty \quad \text{and} \quad \sum_{n \ge 1} \gamma_n^2 < + \infty
\owntag[eq:gamma_assumptions]{$\mathcal{H}_{\gamma1}$}
\end{equation}
and we also assume that
\begin{align}
\varpi := \limsup_{n \to \infty} \frac{\gamma_n - \gamma_{n+1}}{\gamma_{n+1}^2} < \infty .
\owntag[eq:gamma_assumptions_2]{$\mathcal{H}_{\gamma2}$}
\end{align}
For example, if $\gamma_n = \gamma_1/n^\eta$ with $\eta \in (1/2,1)$ then $\varpi = 0$; if $\gamma_n = \gamma_1/n$ then $\varpi = \gamma_1$.

\medskip

In stochastic gradient algorithms, the true gradient is measured with a zero-mean noise $\zeta$, which law only depends on the current position. That is, let us consider a family of random fields $(\zeta_n(x))_{x \in \mathbb{R}^d, n \in \mathbb{N}}$ such that for every $n \in \mathbb{N}$, $(\omega, x) \in \Omega \times \mathbb{R}^d \mapsto \zeta_n(x, \omega)$ is measurable and for all $x \in \mathbb{R}^d$, the law of $\zeta_n(x)$ only depends on $x$ and $(\zeta_n(x))_{n \in \mathbb{N}}$ is an i.i.d. sequence independent of $W$. We make the following assumptions:
\begin{equation}
\label{eq:zeta_assumptions}
\forall x \in \mathbb{R}^d, \ \forall p \ge 1, \ \mathbb{E}[\zeta_{1}(x)] = 0 \quad \text{and} \quad \mathbb{E}[|\zeta_{1}(x)|^p] \le C_p V^{p/2}(x).
\end{equation}
We then consider the Euler-Maruyama scheme with decreasing steps associated to $(Y_t)$:
\begin{align}
\label{eq:def_Y_bar}
& \bar{Y}_0^{x_0} = x_0, \quad \bar{Y}_{\Gamma_{n+1}}^{x_0} = \bar{Y}_{\Gamma_n} + \gamma_{n+1} \left(b_{a(\Gamma_{n})}(\bar{Y}^{x_0}_{\Gamma_{n}}) + \zeta_{n+1}(\bar{Y}_{\Gamma_n}^{x_0}) \right) + a(\Gamma_{n}) \sigma(\bar{Y}_{\Gamma_n}^{x_0})(W_{\Gamma_{n+1}} - W_{\Gamma_n}),
\end{align}
We extend $\bar{Y}^{x_0}_{\boldsymbol{\cdot}}$ on $\mathbb{R}^+$ by considering its genuine continuous interpolation:
\begin{equation}
\label{eq:def_Y_bar_genuine}
\forall t \in [\Gamma_n, \Gamma_{n+1}), \  \bar{Y}^{x_0}_{t} = \bar{Y}^{x_0}_{\Gamma_n} + (t-\Gamma_n) \left(b_{a(\Gamma_n)}(\bar{Y}^{x_0}_{\Gamma_n}) + \zeta_{n+1}(\bar{Y}_{\Gamma_n}^{x_0}) \right) + a(\Gamma_n) \sigma(\bar{Y}^{x_0}_{\Gamma_n}) (W_t - W_{\Gamma_n}) .
\end{equation}

\subsection{Main results}

%We now state our main results.

\begin{theorem}
\label{thm:main}
\begin{enumerate}[label=(\alph*)]
\item Let $Y$ be defined in \eqref{eq:def_Y}. Assume \eqref{Eq:eq:min_V}, \eqref{Eq:eq:V_assumptions}, \eqref{Eq:eq:sigma_assumptions}, \eqref{eq:ellipticity} and \eqref{Eq:eq:V_confluence}. Then, for every $\alpha \in (0,1)$, if $A$ is large enough, then for every $x_0 \in \mathbb{R}^d$ and for every $t >0$:
\begin{equation}
\dtv\left(Y_t^{x_0},\nu_{a(t)}\right) \le Ce^{C\sqrt{\log(t)}(1+|x_0|^2)} t^{-\alpha} .
\end{equation}

\item Let $\bar{Y}$ be defined in \eqref{eq:def_Y_bar}. Assume \eqref{Eq:eq:min_V}, \eqref{Eq:eq:V_assumptions}, \eqref{Eq:eq:sigma_assumptions}, \eqref{eq:ellipticity} and \eqref{Eq:eq:V_confluence}. Assume furthermore that $\sigma \in \mathcal{C}^{2r}_b$. Assume furthermore \eqref{Eq:eq:gamma_assumptions} and \eqref{Eq:eq:gamma_assumptions_2}, that $V$ is $\mathcal{C}^3$ with $\|\nabla^3 V\| \le CV^{1/2}$ and that $\sigma$ is $\mathcal{C}^3$ with $\|\nabla^3(\sigma \sigma^\top) \| \le CV^{1/2}$. Then, for every $\alpha \in (0,1)$, if $A$ is large enough, then for every $x_0 \in \mathbb{R}^d$ and for every $t >0$:
\begin{equation}
\label{eq:thm_Y_bar}
\dtv\left(\bar{Y}^{x_0}_t, \nu_{a(t)} \right) \le C \left(\log^{1/2}(t)\max\left[V^2(x_0),1+|x_0|\right]t^{-\alpha} + e^{C\sqrt{\log(t)} (1+|x_0|^2)} t^{C/A^2} \gamma_{N(Ct)}^{r/(2r+1)} \right).
\end{equation}
\end{enumerate}
\end{theorem}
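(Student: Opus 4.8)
The plan is to follow the "domino strategy" of \cite{bras2021-2} adapted to total variation. The backbone is the comparison between the true schedule $Y$ (resp. its scheme $\bar Y$) and an auxiliary process $X$ whose diffusion parameter $a$ is frozen to a constant on each plateau $[T_n,T_{n+1}]$, where $(T_n)$ is a suitably chosen increasing sequence of times. On each plateau the process $X$ is a time-homogeneous elliptic diffusion with a confluent drift, so by the contraction estimates of Section \ref{sec:X_convergence} the law of $X_{T_n}$ converges geometrically (in total variation, via those same regularization lemmas) towards the instantaneous Gibbs measure $\nu_{a(T_n)}$, and since $\nu_{a(t)}$ moves slowly one controls $\dtv(X_{T_n},\nu_{a(T_n)})$ by a term of order $t^{-\alpha}$ once $A$ is large enough. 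The remaining work is to bound the discrepancy $\dtv(X_t,Y_t)$ (resp. $\dtv(X_t,\bar Y_t)$) accumulated over the plateaux by summing the per-plateau small-time bounds established in Section \ref{sec:small_time_bounds}.

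Concretely, for part (a) I would write, for $t\in[T_n,T_{n+1}]$,
\[
\dtv(Y_t,\nu_{a(t)}) \le \dtv(Y_t,X_t) + \dtv(X_t,\nu_{a(T_n)}) + \dtv(\nu_{a(T_n)},\nu_{a(t)}),
\]
and further telescope $\dtv(Y_t,X_t)$ along the plateaux using the semigroup/Markov property: at each step the contribution of replacing $a(\cdot)$ by its frozen value on $[T_k,T_{k+1}]$ is a small-time total variation increment controlled by the difference $|a(s)-a(T_k)|$ on that interval (which is $O(\gamma$-like $/\log$) because $a$ is slowly varying), times an Aronson-type constant depending on the ellipticity floor $\ubar\sigma_0 a(T_k)$. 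The crucial point, already flagged in the introduction, is that these short-time TV bounds degrade as $a\to0$ (the density constants blow up polynomially in $1/a(t)\sim\sqrt{\log t}$); choosing $A$ large enough makes the geometric contraction rate on each plateau beat this polynomial loss, yielding the net factor $e^{C\sqrt{\log t}(1+|x_0|^2)}t^{-\alpha}$. The dependence on $x_0$ enters through the moment bounds $\mathbb E[V(Y_t)]\le C(1+|x_0|^2)$-type controls used to handle the $V^{1/2}$ growth of $b_a$ and $\Upsilon$ when integrating the local estimates.

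For part (b) the scheme $\bar Y$ introduces two extra sources of error on top of what appears in (a): the Euler discretization bias and the gradient noise $\zeta_n$. I would split
\[
\dtv(\bar Y_t,\nu_{a(t)}) \le \dtv(\bar Y_t,X_t) + \dtv(X_t,\nu_{a(t)}),
\]
treat the second term exactly as in (a), and for the first term compare $\bar Y$ to the plateau diffusion $X$ on each interval using the scheme-vs-diffusion small-time bound from Section \ref{sec:small_time_bounds}. The hypothesis $\sigma\in\mathcal C^{2r}_b$ together with the $\mathcal C^3$ regularity of $V$ and $\sigma\sigma^\top$ is what produces the step-size rate $\gamma_{N(Ct)}^{r/(2r+1)}$: iterating a one-step total variation estimate of order $\gamma_{n+1}^{1+r/(2r+1)}$ type (obtained by a Malliavin integration-by-parts on the Gaussian increment, absorbing $r$ derivatives of $\sigma$) and summing against $\sum\gamma_n=\infty$ under \eqref{Eq:eq:gamma_assumptions}, \eqref{Eq:eq:gamma_assumptions_2}. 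Assumption \eqref{Eq:eq:gamma_assumptions_2} ensures $a(\Gamma_n)$ varies slowly enough along the grid that the frozen-$a$ comparison is legitimate, and the $\zeta$ moment bounds \eqref{eq:zeta_assumptions} let the extra-noise term be absorbed into the same $V^2(x_0)$-type prefactor.

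The main obstacle, as the authors themselves emphasize, is the small-time total variation comparison between two diffusions (and between the diffusion and its scheme) with nearly-equal but position-dependent and degenerating diffusion coefficients: unlike the $\mathcal W_1$ case one cannot simply Lipschitz-bound test functions, so one must Girsanov away the drift, regularize to run a Malliavin-type integration by parts, and invoke Aronson bounds on the heat kernel and its gradient — all while tracking the precise blow-up of the constants in the ellipticity parameter $a(t)$. Getting that $a$-dependence sharp enough that "$A$ large" closes the argument is the delicate balance, and it is the content of Section \ref{sec:small_time_bounds}; the rest of the proof of Theorem \ref{thm:main} is the (still careful but essentially bookkeeping) summation of these increments over the plateaux together with the contraction of $X$.
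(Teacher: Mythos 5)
Your overall architecture is the same as the paper's: freeze $a$ on plateaux $[T_n,T_{n+1}]$, prove exponential total-variation contraction of the plateau diffusion towards $\nu_{a_n}$ (Theorem \ref{thm:contraction_dTV}, obtained by upgrading the $\mathcal{W}_1$ contraction through a Bismut--Elworthy--Li regularization costing a factor $a^{-1}$), compare $\nu_{a_n}$ with $\nu_{a(t)}$, and control $\dtv(Y_t,X_t)$ and $\dtv(\bar Y_t,X_t)$ by a domino argument whose terminal increment is handled by the small-time TV bounds of Section \ref{sec:small_time_bounds}, tracking the blow-up of all constants in $a_n^{-1}\sim\sqrt{\log t}$ so that $A$ large closes the recursion. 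So the route is the right one.

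There is, however, one point where your description of the mechanism would not work if carried out literally. You cannot bound the discrepancy accumulated on a plateau either by applying a small-time TV estimate to the whole plateau (its length $T_{n+1}-T_n\to\infty$, and the bounds of Propositions \ref{prop:dtv_X_Y_st} and \ref{prop:dtv_X_Y_bar_st} grow in $t$), or by ``iterating a one-step total variation estimate of order $\gamma_{n+1}^{1+r/(2r+1)}$ and summing'': the raw one-step TV error between the scheme and the diffusion is only $O(\gamma^{r/(2r+1)})$, not $O(\gamma^{1+r/(2r+1)})$, and summing it over the $\asymp(T_{n+1}-T_n)/\gamma$ steps of a plateau diverges. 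The actual domino writes each one-step difference composed on the right with the remaining plateau semigroup $P^{X,n}_{T_{n+1}-\Gamma_k}$; for all but the last step this regularization converts the TV discrepancy into a weak-error term of order $\gamma_k^2/(T_{n+1}-\Gamma_k)$ (plus $(a_n-a_{n+1})\gamma_k/(T_{n+1}-\Gamma_k)$), via Propositions \ref{prop:3.6:Y_dTV} and \ref{prop:3.6:Y_bar_dTV}, whose sum over the plateau is only logarithmic. The genuine small-time TV estimate is invoked \emph{only once per plateau}, on the terminal increment of length one step, and that single application is what produces the $\gamma_{N(Ct)}^{r/(2r+1)}$ term in \eqref{eq:thm_Y_bar} (in part (a) the auxiliary grid $(\gamma_k)$ is a free parameter of the proof and is tuned so that this terminal term is $O(n^{-1})$). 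Without this split between regularized weak-error terms and a single unregularized terminal term, the bookkeeping does not close; the rest of your outline (contraction beating the $e^{C/a_{n+1}^2}$ losses for $A$ large, the shifted schedule for $t$ near $T_n$, the moment bounds in $V$ and $e^{\lambda|x|^2}$ giving the $x_0$-dependence) matches the paper.
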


\begin{remark}
Depending on the step sequence $(\gamma_n)$, we can compare the two terms arising in the right-hand side of \eqref{eq:thm_Y_bar}. For example, if $\gamma_n = \gamma_1 n^{-\eta}$ for some $\eta \in (1/2,1]$, then
\begin{itemize}
	\item If $\eta = 1$, then $\gamma_{N(Ct)} \asymp e^{-Ct}$ and the first term is the dominating term.
	\item If $\eta \in (1/2,1)$ then $\gamma_{N(Ct)} \asymp (Ct)^{-\eta/(1-\eta)}$.
\end{itemize}

\end{remark}

\subsection{Extensions and interpolations of the processes}

Let us define the following processes that will be used as auxiliary tools in the proofs.

$\bullet$ We define $(X_t)$ as the solution the following SDE where the coefficients piecewisely depend on the time; $X$ is then said to be "by plateaux":
\begin{align}
\label{eq:def_X}
& X_0^{x_0} = x_0, \quad dX_t^{x_0} = b_{a_{k+1}}(X_t^{x_0})dt + a_{k+1} \sigma(X_t^{x_0})dW_t, \quad t \in [T_k,T_{k+1}],
\end{align}
where $b_a$ is defined in \eqref{eq:def_b} and the time schedule $(T_n)$ is defined by
\begin{equation}
\label{eq:def_T_n}
T_n := C_{(T)}n^{1+\beta},
\end{equation}
where $C_{(T)}>0$, $\beta>0$ and $a_n := a(T_n)$. More generally, we define $(X^{x,n}_t)$ as the solution of
\begin{equation}
\label{eq:def_X:2}
X^{x,n}_0 = x, \quad dX_t^{x,n} = b_{a_{k+1}}(X_t^{x,n})dt + a_{k+1} \sigma(X_t^{x,n})dW_t, \quad t \in [T_k-T_n,T_{k+1}-T_n], \ k \ge n,
\end{equation}
i.e. $(X_t^{x,n})$ has the conditional law of $(X_{T_n+t})_{t \ge 0}$ given $X_{T_n}=x$. We have $X_{t}^x = X_{t}^{x,0}$. The Markov transition kernel associated to $X^{\cdot,n}$ denoted $P^{X,n}_t$ reads on Borel functions $f : \mathbb{R}^d \to \mathbb{R}^+$, $P^{X,n}_t f(x) = \mathbb{E}[f(X^{x,n}_t)]$.

\medskip

$\bullet$ Considering now the original SDE \eqref{eq:def_Y}, we also define for every $x \in \mathbb{R}^d$ and every fixed $u \ge 0$:
\begin{align}
\label{eq:def_Y:2}
Y_{0,u}^{x} & = x, \quad dY_{t,u}^{x} = b_{a(t+u)}(Y_{t,u}^{x})dt + a(t+u) \sigma(Y_{t,u}^{x}) dW_t,
\end{align}
so that $Y^x = Y^x_{\cdot, 0}$.
We define the Markov transition kernel associated to $Y$ between the times $t$ and $t+u$ by $P^Y_{t,u}$ such that for all Borel functions $f : \mathbb{R}^d \to \mathbb{R}^+$, $P^Y_{t,u} f(x) = \mathbb{E}[f(Y^x_{t,u})]$.

\medskip

$\bullet$ Considering finally \eqref{eq:def_Y_bar} and \eqref{eq:def_Y_bar_genuine}, we define for every $n \ge 0$, $(\bar{Y}^x_{t,\Gamma_n})_{t\ge 0}$, first at times $\Gamma_k-\Gamma_n$, $k \ge n$, by
\begin{align}
\bar{Y}^x_{0,\Gamma_n} = x, \quad \bar{Y}^x_{\Gamma_{k+1}-\Gamma_n,\Gamma_n} & = \bar{Y}^x_{\Gamma_k -\Gamma_n, \Gamma_n} + \gamma_{k+1} \left(b_{a(\Gamma_k)}(\bar{Y}_{\Gamma_k-\Gamma_n,\Gamma_n}^{x}) + \zeta_{k+1}(\bar{Y}_{\Gamma_k-\Gamma_n,\Gamma_n}^{x}) \right) \nonumber \\
\label{eq:def_Y_bar:2}
& \quad + a(\Gamma_k)\sigma(\bar{Y}_{\Gamma_k-\Gamma_n,\Gamma_n}^{x})(W_{\Gamma_{k+1}} - W_{\Gamma_k}),
\end{align}
then at every time $t$ by the genuine interpolation on the intervals $([\Gamma_k-\Gamma_n, \Gamma_{k+1}- \Gamma_n))_{k \ge n}$ as before. In particular $\bar{Y}^x = \bar{Y}^x_{\cdot,0}$.
Still more generally, we define $\bar{Y}^x_{t,u}$ where $u \in (\Gamma_n, \Gamma_{n+1})$ as
\begin{equation*}
\bar{Y}^x_{0,u} = x, \quad \bar{Y}^x_{t,u} = \left\lbrace \begin{array}{ll}
 x + t(b_a(x) + \zeta_{n+1}(x)) + a^2(u)\sigma(x)(W_t-W_{\Gamma_u}) & \text{ if } t \in [u, \Gamma_{n+1}] \\
 = \bar{Y}^{\bar{Y}^x_{\Gamma_{n+1}-u,u}}_{t-(\Gamma_{n+1}-u),\Gamma_{n+1}} & \text{ if } t > \Gamma_{n+1} .
\end{array} \right.
\end{equation*}
For $n$, $k \ge 0$, for $u \in [\Gamma_k,\Gamma_{k+1})$ and $\gamma \in [0,\Gamma_{k+1}-u]$, let $P^{\bar{Y}}_{\gamma,u}$ be the Markov transition kernel associated to $\bar{Y}_{\cdot,u}$ between the times $0$ and $\gamma$ i.e. for all Borel functions $f : \mathbb{R}^d \to \mathbb{R}^+$, $P^{\bar{Y}}_{\gamma,u}f(x) = \mathbb{E}[f(\bar{Y}^x_{\gamma,u})]$.

\section{Bounds in total variation for small $t$}
\label{sec:small_time_bounds}

In this section we give bounds for the total variation distance between the processes $X$, $Y$ and $\bar{Y}$. Although such bounds are straightforward for $L^p$-distances, they are more difficult to establish for $\dtv$. To this end we adopt a strategy similar to \cite{bras2021-3}.

For $x \in \mathbb{R}^d$ and for $a \in \mathbb{R}^+$ we define the "cut" drift $\tilde{b}^x_{a} : \mathbb{R}^d \to \mathbb{R}^d$ which is the drift $b_a$ which is null outside a compact set centred on $x$. More precisely, we choose $R > 0$ and we consider a $\mathcal{C}^\infty$ decreasing function $\psi : \mathbb{R}^+ \to \mathbb{R}^+$ such that $\psi = 1$ on $[0,R^2]$ and $\psi = 0$ on $[(R+1)^2,\infty)$ and we define $\tilde{b}^x_a(y) := b_a(y)\psi(|y-x|^2)$, so that $|\tilde{b}^x_a|$ is bounded by $C(1+|x|)$ since $b_a$ is Lipschitz-continuous.

For $\sigma : \mathbb{R}^d \to \mathcal{M}_d(\mathbb{R})$, we denote the martingale:
\begin{equation}
M(\sigma)^x_0 = x, \quad dM(\sigma)_t^x = \sigma(M(\sigma)^x_t) dW_t
\end{equation}
with its associated one-step Euler-Maruyama scheme:
\begin{equation}
\bar{M}(\sigma)_t^x = x + \sigma(x) W_t .
\end{equation}

\begin{lemma}
\label{lemma:change_time_martingale}
Let $Z$ be solution of the following SDE:
$$ dZ^x_t = u(t) \sigma_{_Z}(Z^x_t) dW_t ,$$
where $u:\mathbb{R}^+ \to (0,\infty)$ is $\mathcal{C}^1$ and bounded. Then $(Z_t) \sim (M(\sigma)_{F^{(-1)}(t)})$, where $F:\mathbb{R}^+ \to \mathbb{R}^+$ is solution of the differential equation
$$ F(0)=0, \quad F'(t) = \frac{1}{u^2(F(t))} $$
and where $F^{(-1)}$ denotes the (continuous) inverse function of $F$.
\end{lemma}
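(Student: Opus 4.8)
The plan is to recognise the statement as an instance of the deterministic time-change theorem for continuous local martingales. First I would introduce
\[
\tau(t) := \int_0^t u^2(s)\,ds, \qquad t \ge 0 ,
\]
which is $\mathcal C^1$, strictly increasing and satisfies $\tau(0)=0$ because $u$ is continuous and strictly positive (it is finite on $\mathbb R^+$ since $u$ is bounded). The elementary but essential observation is that $\tau$ is exactly the inverse of $F$: writing $G := \tau^{-1}$ and differentiating $G(\tau(t))=t$ gives $G'(\tau(t)) = 1/\tau'(t) = 1/u^2(t)$, hence $G'(s) = 1/u^2(G(s))$ and $G(0)=0$, which is precisely the Cauchy problem defining $F$; since $y\mapsto 1/u^2(y)$ is locally Lipschitz (indeed $\mathcal C^1$, as $u\in\mathcal C^1$ and $u>0$), the Cauchy--Lipschitz theorem yields $F = \tau^{-1}$, i.e. $F^{(-1)} = \tau$. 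It therefore suffices to prove $(Z^x_t)_{t\ge 0} \sim \bigl(M(\sigma_Z)^x_{\tau(t)}\bigr)_{t\ge 0}$.

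Next I would time-change the martingale $M:=M(\sigma_Z)^x$, which is a continuous $\mathbb R^d$-valued local martingale with $M_0=x$ and $\langle M^i,M^j\rangle_s = \int_0^s(\sigma_Z\sigma_Z^\top)_{ij}(M_r)\,dr$. Since $\tau$ is a deterministic, absolutely continuous, increasing change of time, the process $\widetilde Z_t := M_{\tau(t)}$ is a continuous local martingale with respect to the time-changed filtration, with $\widetilde Z_0 = x$ and, using the substitution $r=\tau(s)$, $dr = u^2(s)\,ds$,
\[
\langle \widetilde Z^i,\widetilde Z^j\rangle_t = \langle M^i,M^j\rangle_{\tau(t)} = \int_0^{\tau(t)}(\sigma_Z\sigma_Z^\top)_{ij}(M_r)\,dr = \int_0^t u^2(s)\,(\sigma_Z\sigma_Z^\top)_{ij}(\widetilde Z_s)\,ds .
\]
By the standard representation of a continuous local martingale with absolutely continuous bracket (on a possibly enlarged probability space carrying an auxiliary Brownian motion, needed when $\sigma_Z$ fails to be invertible), there is a Brownian motion $\widetilde W$ with $d\widetilde Z_t = u(t)\,\sigma_Z(\widetilde Z_t)\,d\widetilde W_t$, $\widetilde Z_0 = x$; i.e. $\widetilde Z$ is a weak solution of the SDE defining $Z^x$.

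Finally, the coefficients $(t,y)\mapsto u(t)\sigma_Z(y)$ being continuous in $t$ and (uniformly, $u$ being bounded and $\sigma_Z$ Lipschitz under our standing assumptions) Lipschitz in $y$, that SDE enjoys pathwise uniqueness, hence uniqueness in law, so $\widetilde Z$ and $Z^x$ have the same law and $(Z^x_t) \sim (M(\sigma_Z)^x_{\tau(t)}) = (M(\sigma_Z)^x_{F^{(-1)}(t)})$. The only delicate point is the martingale representation step, i.e. recovering an explicit driving Brownian motion from the prescribed bracket, which is the reason one allows an enlargement of the probability space; the remainder is the deterministic identification of $F^{(-1)}$ with $\tau$ and an invocation of uniqueness in law.
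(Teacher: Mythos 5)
Your proof is correct, and it rests on the same fundamental idea as the paper's --- a deterministic time change identifying the two processes in law --- but you run it in the opposite direction and with different technical machinery. The paper time-changes $Z$ by $F$ and verifies by a direct It\^o computation that $Z^x_{F(t)}$ solves the SDE of $M(\sigma_Z)$, constructing the driving Brownian motion explicitly as $\widetilde W_t=\int_0^t (F'(s))^{-1/2}\,dW_{F(s)}$ (a L\'evy-characterization argument); uniqueness in law is left implicit. You instead time-change $M(\sigma_Z)$ by $\tau(t)=\int_0^t u^2(s)\,ds$, identify the bracket of the resulting local martingale, and invoke the martingale representation theorem on an enlarged space plus Yamada--Watanabe to conclude. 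Your route is slightly heavier (the representation theorem is a bigger hammer than the explicit construction, though as you note it is dispensable here since $\sigma$ is uniformly elliptic in this paper), but it has the merit of making explicit the identity $F^{(-1)}=\tau$, i.e.\ $F^{(-1)}(t)=\int_0^t u^2(s)\,ds$, which the paper only uses implicitly when it later writes $F^{(-1)}(t)=a_{n+1}^2 t$ for constant $u$ and $a_n^2 t\le F^{(-1)}(t)\le \ldots$ in Proposition \ref{prop:dtv_X_Y_st}. Both arguments share the same minor gloss: neither dwells on why $F$ does not blow up in finite time (equivalently, why $\tau$ maps $\mathbb{R}^+$ onto $\mathbb{R}^+$), which is harmless here and in any case your formulation via $\tau$ sidesteps it, since $\tau$ is always globally defined.
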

\begin{proof}
First, $F$ is well defined and is strictly increasing with $F(t) \to \infty$ as $t \to \infty$ since $u$ is bounded, so that $F^{(-1)} : \mathbb{R}^+ \to \mathbb{R}^+$ is well defined as well.
We have
\begin{align*}
d\left(Z^x_{F(t)}\right) & = u(F(t)) \sigma_{_Z}(Z^x_{F(t)}) d\left(W_{F(t)}\right) = F'(t)^{1/2} u(F(t)) \sigma_{_Z}(Z^x_{F(t)}) d\widetilde{W}_t = \sigma_{_Z}(Z^x_{F(t)}) d\widetilde{W}_t.
\end{align*}
where $\widetilde{W}$ is the Brownian motion defined by $\widetilde{W}_t = \int_0^t (F'(s))^{-1/2} dW_{F(s)}$.
\end{proof}

%Let us recall a result giving bounds on the total variation between processes, knowing bounds on the $L^1$-Wasserstein distance and Aronson's bounds on derivatives of the densities of both processes up to some even order.

\subsection{Total variation bound in small time for the Euler-Maruyama scheme}

\begin{proposition}
\label{prop:dtv_X_Y_bar_st}
Assume that $\sigma \in \mathcal{C}^{2r}_b$.
There exists $C>0$ such that for every $n$, $k \ge 0$, for every $u \in [\Gamma_k,\Gamma_{k+1})$ and every $t>0$ such that $u \in [T_n,T_{n+1}]$, $t \le \Gamma_{k+1}-u$ and $u+t \in [T_{n},T_{n+1}]$,
\begin{equation}
\label{eq:dtv_X_Y_bar_st}
\dtv(X^{x,n}_t, \bar{Y}^x_{t,u}) \le Ce^{Ca_{n+1}^{-1}(1+|x|^2)}t^{r/(2r+1)} + C a_n^{-2} (a_n-a_{n+1}) .
\end{equation}
\end{proposition}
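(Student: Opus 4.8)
The plan is to compare $X^{x,n}_t$ and $\bar Y^x_{t,u}$ by first replacing the true (position-dependent, time-varying) drifts with cut drifts supported near $x$, then removing the drift entirely via a Girsanov change of measure, and finally comparing the two resulting diffusion martingales in total variation using the small-time density estimates of \cite{bras2021-3}. Concretely, I would proceed as follows. First I would reduce to the case where the drift is the cut drift $\tilde b^x_a$ defined before the proposition: since $b_a$ is Lipschitz uniformly in $a$ and the processes started at $x$ stay in $\mathbf{B}(x,R)$ with overwhelming (exponentially small complement) probability on a time interval of length $t\le \Gamma_{k+1}-u$ which is bounded, the total variation distance between $X^{x,n}$ (resp. $\bar Y^x_{\cdot,u}$) and its cut-drift analogue is controlled by the exit probability, which produces a contribution absorbed into $Ce^{Ca_{n+1}^{-1}(1+|x|^2)}$; here the factor $a_{n+1}^{-1}$ enters because the noise intensity is $a_{n+1}\sigma$, so small-ball/exit estimates for the rescaled process carry that dependence. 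After this reduction both drifts are bounded (by $C(1+|x|)$), so I can apply Girsanov: write each law as a density times the law of the driftless process $M(a_{n+1}\sigma)^x$ (for $X$) and $\bar M(a(u)\sigma)^x$ (for $\bar Y$), and bound the total variation between, say, the cut-drift SDE and its driftless companion by $\tfrac12$ times the $L^2$-norm of the stochastic exponential minus one, i.e. essentially $\big(\mathbb E\int_0^t |a^{-1}\sigma^{-1}\tilde b^x_a|^2\,ds\big)^{1/2}\le C a_{n+1}^{-1}(1+|x|)\sqrt t$; combined with the previous step this stays inside the first term of \eqref{eq:dtv_X_Y_bar_st} (possibly after adjusting exponents — note that $t^{1/2}$ dominates $t^{r/(2r+1)}$ for small $t$ only if $r/(2r+1)\ge 1/2$, which fails, so one must be slightly more careful and instead keep the Girsanov contribution as $Ce^{\cdots}t^{1/2}$ and note $t^{1/2}\le t^{r/(2r+1)}$ is false; the honest route is to absorb all the drift-removal error into $Ce^{Ca_{n+1}^{-1}(1+|x|^2)}t^{r/(2r+1)}$ by using that on $[0,t]$ with $t$ bounded one has $t^{1/2}\le C_T\, t^{r/(2r+1)}$ only when $t$ is bounded away from $0$; for $t$ near $0$ one uses instead a direct coupling bound. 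I would actually handle this by splitting at a threshold $t_0$ and using $t^{r/(2r+1)}$ as the slower-vanishing exponent, which is the one that survives).

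With the drifts gone, the core is to bound $\dtv\big(M(a_{n+1}\sigma)^x_t,\ \bar M(a(u)\sigma)^x_t\big)$, i.e. the distance between the genuine driftless diffusion and its one-step Euler scheme, both with diffusion coefficient of order $a_n$. Using Lemma \ref{lemma:change_time_martingale} I can time-change $M(a_{n+1}\sigma)^x$ into $M(\sigma)^x$ evaluated at time $\sim a_{n+1}^2 t$ (since $u\equiv a_{n+1}$ is constant here, $F(t)=t/a_{n+1}^2$), and similarly the Euler scheme $\bar M(a(u)\sigma)^x_t = x + a(u)\sigma(x)W_t$ is exactly $\bar M(\sigma)^x$ at the rescaled time $a(u)^2 t$. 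So up to a harmless time rescaling of order $a_n^2$, the quantity to bound is $\dtv(M(\sigma)^x_s,\bar M(\sigma)^x_s)$ with $s$ of order $a_n^2 t$, which is precisely the homogeneous small-time estimate available from \cite{bras2021-3}: when $\sigma\in\mathcal C^{2r}_b$ is elliptic, $\dtv(M(\sigma)^x_s,\bar M(\sigma)^x_s)\le C s^{r/(2r+1)}$. Plugging $s\asymp a_n^2 t$ gives $C a_n^{2r/(2r+1)} t^{r/(2r+1)}\le C t^{r/(2r+1)}$ since $a_n\le A$, which is the announced first term (the $e^{Ca_{n+1}^{-1}(1+|x|^2)}$ factor is not needed for this driftless comparison but is inherited from the drift-removal steps). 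Here the strategy of \cite{bras2021-3} is: reduce to driftless (done), introduce an artificial Gaussian regularization to perform a Malliavin-type integration by parts, and invoke Aronson's bounds on the density and its derivatives — I would need to re-run that argument paying attention that the ellipticity constant after time-change is $\ubar\sigma_0$ (not $a_n\ubar\sigma_0$), so the Aronson constants do not blow up.

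The last term $C a_n^{-2}(a_n-a_{n+1})$ accounts for the mismatch between the noise level $a_{n+1}$ used by $X^{x,n}$ on $[T_k-T_n,\dots]$ and the noise level $a(u)\in\{a_n,\dots,a_{n+1}\}$ frozen in the Euler step $\bar Y^x_{\cdot,u}$: on the interval in question $u,u+t\in[T_n,T_{n+1}]$ so $a(u)$ and $a_{n+1}$ differ by at most $a_n-a_{n+1}$, and comparing two Gaussian/driftless laws with diffusion matrices $a_{n+1}^2\sigma\sigma^\top$ and $a(u)^2\sigma\sigma^\top$ gives, via the Gaussian total-variation bound of \cite{devroye2018}, a contribution of order $|a(u)^2-a_{n+1}^2|/a_{n+1}^2 \le C a_n^{-2}(a_n-a_{n+1})$ (the relative difference of the covariances). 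Assembling the three pieces — drift removal ($Ce^{Ca_{n+1}^{-1}(1+|x|^2)}t^{r/(2r+1)}$), driftless Euler comparison ($Ct^{r/(2r+1)}$), and noise-level mismatch ($Ca_n^{-2}(a_n-a_{n+1})$) — via the triangle inequality for $\dtv$ yields \eqref{eq:dtv_X_Y_bar_st}. The main obstacle I anticipate is the middle step: carrying out the Malliavin integration-by-parts argument of \cite{bras2021-3} in a way that tracks the ellipticity/$a_n$-dependence of all constants uniformly, and in particular checking that after the Lemma \ref{lemma:change_time_martingale} time-change the relevant constants are genuinely $a$-free so that the $a_n^{2r/(2r+1)}$ factor works in our favour rather than against us.
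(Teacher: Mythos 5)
Your proposal follows essentially the same route as the paper's proof: cut the drift to a compact neighbourhood of $x$, remove it by a Girsanov-type argument (the paper uses the Qian--Zheng representation \cite{qian2003} together with moment bounds on the exponential martingale and gradient bounds on the driftless density, rather than a plain $L^2$/Pinsker bound, but the outcome is the same $Ce^{Ca_{n+1}^{-1}(1+|x|^2)}t^{1/2}$ contribution), compare the driftless diffusion to its one-step Euler scheme after the time change of Lemma \ref{lemma:change_time_martingale} via \cite[Theorem 2.2]{bras2021-3}, and account for the mismatch between the noise levels $a(u)$ and $a_{n+1}$ by the Gaussian total-variation bounds of \cite{devroye2018}. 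Two points need fixing. First, your digression on exponents is backwards: since $r/(2r+1)<1/2$, one has $t^{1/2}\le t^{r/(2r+1)}$ for $t\le 1$ (and here $t\le\Gamma_{k+1}-u\le\gamma_1$ is bounded in any case), so all the $O(t^{1/2})$ and $O(t)$ contributions are absorbed into the first term of \eqref{eq:dtv_X_Y_bar_st} with no case-splitting or threshold needed; the claim in your parenthesis that ``$t^{1/2}\le t^{r/(2r+1)}$ is false'' is itself false, though your final conclusion there (the slower-vanishing exponent $t^{r/(2r+1)}$ survives) is correct. Second, you omit the comparison of the \emph{means} of the two one-step Gaussian laws: $\bar Y^x_{t,u}$ carries the frozen drift $t\bigl(b_{a(u)}(x)+\zeta(x)\bigr)$, including the stochastic-gradient noise $\zeta(x)$, whereas the one-step Euler scheme of $X^{x,n}$ carries $t\,b_{a_{n+1}}(x)$. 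The paper handles this by conditioning on $\zeta(x)$, applying \cite[Theorem 1.2]{devroye2018} to get a term $C\sqrt t\,a_{n+1}^{-1}\bigl(1+|\zeta(x)|^{1/2}\bigr)$, and integrating against the law of $\zeta(x)$ using $\mathbb{E}|\zeta(x)|\le CV(x)$; the resulting $C\sqrt t\,(1+V^{1/2}(x))$ is again absorbed into the first term since $V$ has at most quadratic growth. With these two repairs your argument matches the paper's.
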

\begin{proof}
We apply a strategy of proof similar to \cite[Theorem 2.2]{bras2021-3}. However we need to pay attention to the dependency of the constants in the bounds in $(a_n)$. Let us write
\begin{align}
\dtv(X^{x,n}_t, \bar{Y}^x_{t,u}) & \le \dtv(X^{x,n}_t, \widetilde{X}^{x,n}_t) + \dtv(\widetilde{X}^{x,n}_t, Z^{x,n}_t) + \dtv(Z^{x,n}_t, \bar{Z}^{x,n}_t) \nonumber \\
\label{eq:dTV_X_Y_bar}
& \quad + \dtv(\bar{Z}^{x,n}_t, \bar{X}^{x,n}_t) + \dtv(\bar{X}^{x,n}_t, \bar{Y}^x_{t,u}),
\end{align}
where
\begin{align*}
& \widetilde{X}^{x,n}_0 = x, \quad d\widetilde{X}^{x,n}_t = \tilde{b}_{a_{n+1}}^x(\widetilde{X}^{x,n}_t)dt + a_{n+1} \sigma(\widetilde{X}^{x,n}_t)dW_t, \\
& Z^{x,n}_0 = x, \quad dZ^{x,n}_t = a_{n+1} \sigma(Z^{x,n}_t) dW_t, \\
& \bar{Z}^{x,n}_0=x, \quad \bar{Z}^{x,n}_t = x + a_{n+1} \sigma(x) W_t.
\end{align*}

\medskip

$\bullet$ Using \cite[Lemma 3.2]{bras2021-3}, we have
$$ \dtv(X^{x,n}_t, \widetilde{X}^{x,n}_t) \le C(1+|x|^2)t ,$$
where the constant $C$ does not depend on $n$.

\medskip

$\bullet$ We use \cite[Theorem 2.4]{qian2003} and we rework the bound from \cite[Lemma 3.5]{bras2021-3} to make explicit the dependency in $a_n$. Reworking \cite[Lemma 3.4]{bras2021-3}, we have for $q \ge 1$:
\begin{align}
\label{eq:U_s_bound}
& \textstyle \mathbb{E}\left[\sup_{s \in [0,t]} |U_s^{x,n}|^{2q} \right] \le C e^{C_q a_{n+1}^{-1}(1+|x|^2)}, \\
& U^{x,n}_0 = 1, \quad dU^{x,n}_s = a_{n+1}^{-1} U^{x,n}_s \left\langle \sigma^{-1}(Z^{x,n}_s) \tilde{b}^x_{a_{n+1}}(Z^{x,n}_s), dW_s \right\rangle. \nonumber
\end{align}
Moreover, following Lemma \ref{lemma:change_time_martingale} we have $(Z_t^{x,n}) \sim (M(\sigma)_{F^{(-1)}(t)}^x)$ where the process $(M(\sigma)_t)$ does not depend on $n$ and where $F^{(-1)}(t) = a_{n+1}^2 t$. Thus following \cite[Chapter 9, Theorem 7]{friedman} (also see \cite[Theorem 3.1]{bras2021-3} for the application to SDE's) and since $\sigma \in \mathcal{C}^2_b$ we have
\begin{equation}
\label{eq:M_sigma_density_bound}
|\nabla_x p_{_{M(\sigma)}}(t,x,y)| \le \frac{C}{t^{(d+1)/2}} e^{-c|y-x|^2/t} 
\end{equation}
and then
\begin{align*}
|\nabla_x p_{_Z}(t,x,y)| = |\nabla_x p_{_{M(\sigma)}}(a_{n+1}^{2}t,x,y)| \le \frac{Ca_{n+1}^{-(d+1)}}{t^{(d+1)/2}} e^{-c a_{n+1}^{-2}|y-x|^2/t} \le \frac{Ca_{n+1}^{-(d+1)}}{t^{(d+1)/2}} e^{-c|y-x|^2/t}.
\end{align*}
Then using \cite[Lemma 3.5]{bras2021-3} with the adapted bound on $U_s^{x,n}$ \eqref{eq:U_s_bound} along with \cite[Theorem 2.4]{qian2003}, we obtain
\begin{equation}
\label{eq:dtv_X_tilde_Z}
\dtv(\widetilde{X}^{x,n}_t, Z^{x,n}_t) \le Ce^{C a_{n+1}^{-1}(1+|x|^2)}t^{1/2} .
\end{equation}
The same way, we obtain
$$ \dtv(\bar{Z}^{x,n}_t, \bar{X}^{x,n}_t) \le Ce^{C a_{n+1}^{-1}(1+|x|^2)}t^{1/2}. $$

\medskip

$\bullet$ Following Lemma \ref{lemma:change_time_martingale}, we have $(Z_t^{x,n}) \sim (M(\sigma)^x_{a_{n+1}^{2}t})$ and $(\bar{Z}^{x,n}_t) \sim (\bar{M}(\sigma)^x_{a_{n+1}^{2}t})$, where both processes $M(\sigma)$ and $\bar{M}(\sigma)$ do not depend on $n$.
We then use \cite[Theorem 2.2]{bras2021-3} to get
$$ \dtv(M(\sigma)^x_t,\bar{M}(\sigma)^x_t) \le Ce^{C|x|^2}t^{r/(2r+1)} $$
which implies
$$ \dtv(Z^{x,n}_t, \bar{Z}^{x,n}_t) \le Ce^{C|x|^2} a_{n+1}^{2r/(2r+1)} t^{r/(2r+1)} \le Ce^{C|x|^2}t^{r/(2r+1)}. $$

\medskip

$\bullet$ Let us now investigate $\dtv(\bar{X}^{x,n}_t, \bar{Y}^x_{t,u})$. Conditionally to $\zeta(x)$, both random vectors are Gaussian vectors with
$$ \bar{X}^{x,n}_t \sim \mathcal{N}\left(x + tb_{a_{n+1}}(x), a_{n+1}^2 t \sigma \sigma^\top(x)\right) \quad \text{and} \quad \bar{Y}^x_{t,u} \sim \mathcal{N}\left(x + tb_{a(u)}(x) + t\zeta(x), a^2(u)t \sigma \sigma^\top(x)\right) .$$
Then, conditionally to $\zeta(x)$ we have
\begin{align*}
& \dtv(\bar{X}^{x,n}_t, \bar{Y}^x_{t,u}) \le \dtv\left(\mathcal{N}\left(x + tb_{a_{n+1}}(x), a_{n+1}^2 t \sigma \sigma^\top(x)\right), \mathcal{N}\left(x + tb_{a(u)}(x) + t\zeta(x), a_{n+1}^2 t \sigma \sigma^\top(x)\right) \right) \\
& \quad + \dtv\left(\mathcal{N}\left(x + tb_{a(u)}(x) + t\zeta(x), a_{n+1}^2 t \sigma \sigma^\top(x)\right), \mathcal{N}\left(x + tb_{a(u)}(x) + t\zeta(x), a^2(u)t \sigma \sigma^\top(x)\right) \right) \\
& =: D_1 + D_2.
\end{align*}
We then refer to \cite{devroye2018} which gives bounds on the total variation between two Gaussian laws, first in the case $d>1$.
Using \cite[Theorem 1.1]{devroye2018} with $\lambda_1 = \cdots = \lambda_d = (a(u)^2-a_{n+1}^2)/a_{n+1}^2$, we have
\begin{align*}
D_2 \le C\left(\frac{a^2(u)-a_{n+1}^2}{a_{n+1}^2}\right) \le C a_n^{-1}(a_n-a_{n+1}).
\end{align*}
Using \cite[Theorem 1.2]{devroye2018}, since the $\rho_i$'s are bounded independently of $n$ and since for every $y \in \mathbb{R}^d$, $y^\top \sigma \sigma^\top(x) y \ge \ubar{\sigma}_0^2 |y|^2$, we have
\begin{align*}
D_1 & \le C\sqrt{t} a_{n+1}^{-1} (1 + |\zeta(x)|^{1/2}).
% \le C\max\left( \frac{t^2(a^2(u)-a_{n+1}^2)^2(a^2(u)-a_{n+1}^2) L(x)}{t^2(a^2(u)-a_{n+1}^2)^2 a_{n+1}^2 L(x)}, \frac{t^2(a^2(u)-a_{n+1}^2)^2 |\Upsilon(x)|^2}{t(a^2(u)-a_{n+1}^2) L(x)} \right) \\
\end{align*}
Now, integrating over the law of $\zeta(x)$ and using that $\mathbb{E}|\zeta(x)| \le CV(x)$, we obtain
$$ \dtv(\bar{X}^{x,n}_t, \bar{Y}^x_{t,u}) \le C a_n^{-1} (a_n-a_{n+1}) + C\sqrt{t}(1+V^{1/2}(x)). $$
In the case $d=1$, we use \cite[Theorem 1.3]{devroye2018} and obtain the same bounds.

\medskip

$\bullet$ \textbf{Conclusion:} Considering \eqref{eq:dTV_X_Y_bar}, we get
\begin{align*}
\dtv(X^{x,n}_t, \bar{Y}^x_{t,u}) & \le C(1+|x|^2)t + Ce^{Ca_{n+1}^{-1}(1+|x|^2)}t^{1/2} + Ce^{C|x|^2}t^{r/(2r+1)} \\
& \quad + C a_n^{-1} (a_n-a_{n+1}) + C\sqrt{t}(1+V^{1/2}(x)) \\
& \le Ce^{Ca_{n+1}^{-1}(1+|x|^2)}t^{r/(2r+1)} + C a_n^{-1} (a_n-a_{n+1}).
\end{align*}

\end{proof}

\subsection{Total variation bound in small time for the continuous SDE}

\begin{proposition}
\label{prop:dtv_X_Y_st}
Assume that $\sigma \in \mathcal{C}^{2r}_b$ and let $\bar{\gamma} > 0$. There exists $C>0$ such that for all $\varepsilon > 0$, $n \ge 0$, $u$, $t\ge 0$ such that $u \in [T_n,T_{n+1}]$, $u+t \in [T_{n},T_{n+1}]$ and $t \le \bar{\gamma}$,
\begin{equation}
\label{eq:dtv_X_Y_st}
\dtv(X^{x,n}_t,Y^x_{t,u}) \le Ce^{Ca_{n+1}^{-1}(1+|x|^2)}t^{1/2} + Ca_{n+1}^{-(d+r)}(a(u)-a_{n+1})^{2r/(2r+1)} .
\end{equation}
\end{proposition}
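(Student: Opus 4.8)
The plan is to mirror the structure of the proof of Proposition~\ref{prop:dtv_X_Y_bar_st}, but now comparing the plateau SDE $X^{x,n}$ with the genuinely time-inhomogeneous SDE $Y^x_{t,u}$ rather than with the one-step Euler scheme. As before I would insert the triangle inequality through the ``cut-drift'' process $\widetilde X^{x,n}$ and through the driftless processes: writing $Y^x_{t,u}$-analogues $\widetilde Y^x_{t,u}$ (with drift $\tilde b^x_{a(\cdot+u)}$) and $Z^{Y,x}_{t,u}$ (driftless, with diffusion coefficient $a(s+u)\sigma$), I would bound
\begin{align*}
\dtv(X^{x,n}_t, Y^x_{t,u}) & \le \dtv(X^{x,n}_t,\widetilde X^{x,n}_t) + \dtv(\widetilde X^{x,n}_t, Z^{x,n}_t) + \dtv(Z^{x,n}_t, Z^{Y,x}_{t,u}) \\
& \quad + \dtv(Z^{Y,x}_{t,u}, \widetilde Y^x_{t,u}) + \dtv(\widetilde Y^x_{t,u}, Y^x_{t,u}).
\end{align*}
The first, second, fourth and fifth terms are handled exactly as in Proposition~\ref{prop:dtv_X_Y_bar_st}: Lemma~3.2 of \cite{bras2021-3} (removing/reinstating the drift tail) gives $O((1+|x|^2)t)$ for the first and last, and the Girsanov + Malliavin-integration-by-parts argument (\cite[Lemmas 3.4, 3.5]{bras2021-3}, \cite[Theorem 2.4]{qian2003}, \cite[Ch.~9, Thm~7]{friedman}) with the $a_{n+1}$-dependency tracked as in \eqref{eq:U_s_bound}--\eqref{eq:dtv_X_tilde_Z} gives $O(e^{Ca_{n+1}^{-1}(1+|x|^2)}t^{1/2})$ for the second and fourth; note here one uses that on $[u,u+t]\subset[T_n,T_{n+1}]$ one has $a(s+u)\in[a_{n+1},a_n]$, so the ellipticity and the density bounds are controlled with constants depending on $a_{n+1}$ only. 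These four contributions are all absorbed into the first term of \eqref{eq:dtv_X_Y_st}.

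The genuinely new term is the middle one, $\dtv(Z^{x,n}_t, Z^{Y,x}_{t,u})$: a total-variation comparison of two driftless SDEs with the same $\sigma$ but different (one constant, one time-varying) scalar speed functions, $a_{n+1}$ versus $a(\cdot+u)$. The idea is to apply Lemma~\ref{lemma:change_time_martingale} to \emph{both} processes: $Z^{x,n}_t \sim M(\sigma)^x_{a_{n+1}^2 t}$, while $Z^{Y,x}_{t,u} \sim M(\sigma)^x_{G(t)}$ where $G(t)=\int_0^t a^2(s+u)\,ds$. Hence $\dtv(Z^{x,n}_t, Z^{Y,x}_{t,u}) = \dtv\big(M(\sigma)^x_{a_{n+1}^2 t}, M(\sigma)^x_{G(t)}\big)$, a comparison of the \emph{same} martingale $M(\sigma)$ at two nearby times $s_1 := a_{n+1}^2 t$ and $s_2 := G(t)$, with $s_1 \le s_2 \le a_n^2 t$ (since $a$ is non-increasing) so $|s_2-s_1| \le (a_n^2 - a_{n+1}^2)t \le C a_n^{-2}(a_n-a_{n+1})\cdot a_{n+1}^2 t$, hmm — more simply $|s_2 - s_1| \le (a_n^2-a_{n+1}^2)t$. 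To bound $\dtv(M(\sigma)^x_{s_1},M(\sigma)^x_{s_2})$ one writes it, via the semigroup/Chapman--Kolmogorov decomposition at time $s_1$, as $\dtv(M(\sigma)^x_{s_1}, \mathbb{E}[M(\sigma)^{M(\sigma)^x_{s_1}}_{s_2-s_1}])$, i.e. it is at most $\sup_z \dtv(\delta_z \,\text{vs}\, M(\sigma)^z_{s_2-s_1})$ convolved — but more profitably: the density $p_{M(\sigma)}(s,x,\cdot)$ satisfies Aronson-type bounds $|p(s,x,y)|\le Cs^{-d/2}e^{-c|y-x|^2/s}$ and $|\partial_s p(s,x,y)|\le Cs^{-d/2-1}e^{-c|y-x|^2/s}$ (from \cite{friedman}), so $\int |p(s_2,x,y)-p(s_1,x,y)|dy \le \int_{s_1}^{s_2}\int|\partial_s p(s,x,y)|\,dy\,ds \le C\int_{s_1}^{s_2} s^{-1}ds = C\log(s_2/s_1)$. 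Since $s_2/s_1 \le a_n^2/a_{n+1}^2 = 1 + (a_n^2-a_{n+1}^2)/a_{n+1}^2$, this gives $\dtv \le C a_{n+1}^{-2}(a_n^2-a_{n+1}^2) \le Ca_{n+1}^{-2}(a_n - a_{n+1})$. One then reconciles this with the stated exponent $a_{n+1}^{-(d+r)}(a(u)-a_{n+1})^{2r/(2r+1)}$: interpolating the crude $\log$-bound against the trivial bound $\dtv \le 2$ (or against a bound of the form $a_{n+1}^{-(d+r)}|s_2-s_1|^{?}$ obtained by not integrating all the way), using $a(u)-a_{n+1}\le a_n-a_{n+1}$ and $t\le\bar\gamma$, yields a term of the claimed form $Ca_{n+1}^{-(d+r)}(a(u)-a_{n+1})^{2r/(2r+1)}$; the factor $a_{n+1}^{-(d+r)}$ is exactly what appears when one carries the same $r$-fold-smoothing / $\mathcal{C}^{2r}_b$ argument as in \cite[Theorem 2.2]{bras2021-3} through the change of time rescaling (each derivative of the density in the integration by parts costs a power of $a_{n+1}^{-1}$), and the exponent $2r/(2r+1)$ is the optimal exponent from that same theorem.

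The main obstacle will be obtaining the precise exponent and power of $a_{n+1}$ in the second term of \eqref{eq:dtv_X_Y_st} rather than the cruder bounds above: the naive density-difference estimate gives a $\log$, which is morally $O(a_{n+1}^{-2}(a_n-a_{n+1}))$ and is \emph{not} of the stated form, so one has to run the smoothing/integration-by-parts machinery of \cite[Theorem 2.2]{bras2021-3} with the time-change of Lemma~\ref{lemma:change_time_martingale} and carefully track how the rescaling $t\mapsto a_{n+1}^2 t$ propagates through the Aronson bounds (\eqref{eq:M_sigma_density_bound} and its higher-order analogues for $\sigma\in\mathcal{C}^{2r}_b$), which is where the $a_{n+1}^{-(d+r)}$ and the $(2r+1)$-th root come from. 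A secondary technical point is that $Z^{Y,x}_{t,u}$ has a time-varying (not constant) speed, so Lemma~\ref{lemma:change_time_martingale} must be applied with the non-affine time change $F$ solving $F'=1/a^2(F(\cdot)+u)$; one should check $F$ is well-defined, $\mathcal{C}^1$, and that on the relevant interval $F^{(-1)}$ stays comparable to the affine map $t\mapsto a_{n+1}^2t$ up to the error $a_n^2-a_{n+1}^2$, which is where $(a(u)-a_{n+1})$ (rather than $(a_n-a_{n+1})$) enters, via $|a^2(s+u)-a_{n+1}^2|\le a(u)^2-a_{n+1}^2$ on $[0,t]$ since $a$ is non-increasing and $u\le s+u\le u+t$. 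Everything else is a routine adaptation of the already-proven Proposition~\ref{prop:dtv_X_Y_bar_st}.
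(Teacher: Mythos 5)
Your decomposition is exactly the paper's (the paper's $\widetilde Z^x_{t,u}$ is your $Z^{Y,x}_{t,u}$), and the first, second, fourth and fifth terms are treated identically, so the only real divergence is the middle term $\dtv(Z^{x,n}_t,\widetilde Z^x_{t,u})$. There the paper does \emph{not} compare the two densities in the time variable: it bounds $\|\nabla^{2r}_y p_{Z}\|$ and $\|\nabla^{2r}_y p_{\widetilde Z}\|$ by $Ca_{n+1}^{-(d+r)}t^{-(d+r)/2}e^{-c|x-y|^2/t}$ via the time change, proves the coupling estimate $\|Z^{x,n}_t-\widetilde Z^x_{t,u}\|_1\le C(a(u)-a_{n+1})t^{1/2}$ as in \cite[Lemma 6.2]{bras2021-2}, and then invokes the regularization lemma \cite[Proposition 2.3]{bras2021-3}, which interpolates an $L^1$-coupling bound against density-derivative bounds through a smoothing parameter $\varepsilon$; optimizing $\varepsilon=(a(u)-a_{n+1})^{2/(2r+1)}t$ produces exactly the exponent $2r/(2r+1)$ and the factor $a_{n+1}^{-(d+r)}$. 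Your alternative --- observing that both driftless processes are deterministic time changes of the \emph{same} martingale $M(\sigma)$, so that the middle term equals $\dtv(M(\sigma)^x_{s_1},M(\sigma)^x_{s_2})$ with $s_1=a_{n+1}^2t$, $s_2=\int_0^t a^2(u+s)\,ds$, and then integrating the Friedman bound $|\partial_s p|\le Cs^{-(d+2)/2}e^{-c|y-x|^2/s}$ --- is legitimate and exploits structure the paper's general-purpose lemma ignores. Carried out with $s_2-s_1\le(a^2(u)-a_{n+1}^2)t$ it yields $Ca_{n+1}^{-2}(a(u)-a_{n+1})$, which is uniformly dominated by $Ca_{n+1}^{-(d+r)}(a(u)-a_{n+1})^{2r/(2r+1)}$ (since $a_{n+1}^{d+r-2}(a(u)-a_{n+1})^{1/(2r+1)}\le C$ for $d+r\ge 2$), so it actually proves a \emph{sharper} statement and renders the $\mathcal{C}^{2r}_b$ hypothesis superfluous for this particular term.

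Two caveats on your write-up rather than on the idea. First, your ``reconciliation'' step is muddled: interpolating the $\log$-bound against $\dtv\le 2$ does not manufacture the exponent $2r/(2r+1)$, and the inequality $a(u)-a_{n+1}\le a_n-a_{n+1}$ is invoked in the wrong direction (it cannot upgrade a bound in $a_n-a_{n+1}$ into one in $a(u)-a_{n+1}$); what saves you is that your own estimate, done with $|a^2(s+u)-a_{n+1}^2|\le a^2(u)-a_{n+1}^2$ as you note later, already carries the correct quantity $a(u)-a_{n+1}$ and already implies the stated bound, so no reconciliation is needed. Second, your closing paragraph retreats to ``run the machinery of \cite[Theorem~2.2]{bras2021-3} through the time change'' without executing it --- that machinery is what the paper uses (in the guise of \cite[Proposition 2.3]{bras2021-3} plus an $L^1$ estimate), and if you abandon your direct argument you do need to state and apply that regularization lemma explicitly; as written, neither route is fully carried to the finish line.
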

%
%We use \cite[Theorem 2.4]{qian2003} extended to non-homogeneous diffusion processes stated below.
%\begin{proposition}
%\label{prop:qian-extended}
%Let $Y$ and $Z$ be the solutions of the generic non-homogeneous SDEs
%\begin{align*}
%& Y^x_0 = x, \quad dY^x_t = b_{_Y}(t,Y^x_t)dt + \sigma_{_Y}(t,Y^x_t)dW_t, \\
%& Z^x_0 = x, \quad dZ^x_t = \sigma_{_Y}(t,Z^x_t)dW_t
%\end{align*}
%where $b_Y$ is bounded, Lipschitz-continuous and $\sigma_Y$ is elliptic, bounded and Lipschitz-continuous uniformly in $t \in [0,T]$.
%Then we have for every $t \in [0,T]$, $x,y \in \mathbb{R}^d$,
%\begin{equation}
%\label{eq:girsanov_p}
%p_{_Y}(0,t,x,y) = p_{_Z}(0,t,x,y) + \int_0^t \mathbb{E}\left[U_s^x \langle b_{_Y}(s,Z_s^x), \nabla_x p_{_Z}(s,t,Z_s^x,y) \rangle \right] ds,
%\end{equation}
%where $p(s,t,x,y)$ denotes the transition probability from $x$ to $y$ between times $s$ and $t$ and where $U$ is defined as
%\begin{align*}
%& U_s^x = \exp\left( \int_0^s \langle g(u,Z_u^x) b_{_Y}(u,Z_u^x), dZ_u^x \rangle - \frac{1}{2} \int_0^s \langle g(u,Z_u^x) b_Y(u,Z_u^x), b_Y(u,Z_u^x) \rangle du \right), \quad g = (\sigma \sigma^\top)^{-1}.
%\end{align*}
%\end{proposition}

%
%We now prove Proposition \ref{prop:dtv_X_Y_st}.
\begin{proof}
We have
\begin{align}
\dtv(X^{x,n}_t, Y^x_{t,u}) & \le \dtv(X^{x,n}_t, \widetilde{X}^{x,n}_t) + \dtv(\widetilde{X}^{x,n}_t, Z^{x,n}_t) + \dtv(Z^{x,n}_t, \widetilde{Z}^x_{t,u}) \nonumber \\
\label{eq:dTV_X_Y_st}
& \quad + \dtv(\widetilde{Z}^x_{t,u}, \widetilde{Y}^x_{t,u}) + \dtv(\widetilde{Y}^x_{t,u}, Y^x_{t,u})
\end{align}
where
\begin{align*}
& d\widetilde{X}^{x,n}_t = \tilde{b}^x_{a_{n+1}}(\widetilde{X}^{x,n}_t)dt + a_{n+1}\sigma(\widetilde{X}^{x,n}_t)dW_t , \\
& dZ^{x,n}_t = a_{n+1}\sigma(Z^{x,n}_t)dW_t, \\
& d\widetilde{Z}^x_{t,u} = a(u+t)\sigma(\widetilde{Z}^x_{t,u})dW_t , \\
& d\widetilde{Y}^x_{t,u} = \tilde{b}^x_{a(u+t)}(\widetilde{Y}^x_{t,u})dt + a(u+t)\sigma(\widetilde{Y}^x_{t,u})dW_t.
\end{align*}
Using \cite[Lemma 3.2]{bras2021-3}, we have
$$ \dtv(X^{x,n}_t, \widetilde{X}^{x,n}_t) + \dtv(\widetilde{Y}^x_{t,u}, Y^x_{t,u}) \le C(1+|x|^2)t .$$
Using \eqref{eq:dtv_X_tilde_Z} again, we have
$$ \dtv(\widetilde{X}^{x,n}_t, Z^{x,n}_t) \le Ce^{C a_{n+1}^{-1}(1+|x|^2)}t^{1/2} .$$
Moreover, using \cite[Theorem 2.4]{qian2003} (with an immediate adaptation to the non-homogeneous case) and establishing the same bounds as in \cite[Lemma 3.5]{bras2021-3}, we also have
\begin{equation}
\label{eq:dtv_Z_tilde_Y_tilde}
\dtv(\widetilde{Z}^x_{t,u}, \widetilde{Y}^x_{t,u}) \le Ce^{C a_{n+1}^{-1}(1+|x|^2)}t^{1/2} .
\end{equation}
We now turn to $\dtv(Z^{x,n}_t, \widetilde{Z}^x_{t,u})$.
%a strategy similar to \cite[Theorem 2.1]{bras2021-3}. That is, we introduce artificial regularization. For $\zeta \sim \mathcal{N}(0,I_d)$ and $\varepsilon>0$, let us write
%\begin{align*}
%\dtv(Z^{x,n}_t, \widetilde{Z}^x_{t,u}) & \le \dtv(Z^{x,n}_t, Z^{x,n}_t + \sqrt{\varepsilon} \zeta) + \dtv(Z^{x,n}_t + \sqrt{\varepsilon} \zeta, \widetilde{Z}^x_{t,u} + \sqrt{\varepsilon} \zeta) + \dtv(\widetilde{Z}^x_{t,u} + \sqrt{\varepsilon} \zeta, \widetilde{Z}^x_{t,u}) \\
%& =: D_2 + D_3 + D_4 .
%\end{align*}
Using Lemma \ref{lemma:change_time_martingale} as in \eqref{eq:M_sigma_density_bound} we have
$$ |\nabla^{2r}_y p_{_Z}(t,x,y)| = |\nabla^{2r}_y p_{_{M(\sigma)}}(a_{n+1}^{2}t,x,y)| \le C\frac{a_{n+1}^{-(d+r)}}{t^{(d+r)/2}} e^{-c|x-y|^2/t} .$$
%and then
%$$ D_2 \le Ca_{n+1}^{-(d+2)/2} \varepsilon t^{-1} .$$
To bound $p_{{\widetilde{Z}}}$ we use the change of time $F$ satisfying $F'(t) = a^{-2}(u+F(t))$ so that
$$ a_n^{-2} t \le F(t) \le a_{n+1}^{-2} t \quad \text{and} \quad a_{n+1}^{2} t \le F^{(-1)}(t) \le a_n^{2} t $$
and then
$$ |\nabla^{2r}_y p_{_{\widetilde{Z}}}(t,x,y)| = |\nabla^{2r}_y p_{_{M(\sigma)}}(F^{(-1)}(t),x,y)| \le C\frac{a_{n+1}^{-(d+r)}}{t^{(d+r)/2}} e^{-c|x-y|^2/t} .$$
%which in turns implies
%$$ D_4 \le Ca_{n+1}^{-(d+2)} \varepsilon t^{-1} .$$
We prove as in \cite[Lemma 6.2]{bras2021-2} that
$$ \| Z^{x,n}_t - \widetilde{Z}^x_{t,u} \|_1 \le C(a(u)-a_{n+1})t^{1/2} $$
and then using \cite[Proposition 2.3]{bras2021-3} we get for every $\varepsilon>0$:
$$ \dtv(Z^{x,n}_t, \widetilde{Z}^x_{t,u}) \le Ca_{n+1}^{-(d+r)}\varepsilon^r t^{-r} + C\varepsilon^{-1/2}(a(u)-a_{n+1})t^{1/2}. $$
Choosing $\varepsilon = (a(u)-a_{n+1})^{2/(2r+1)} t$ yields
$$ \dtv(Z^{x,n}_t, \widetilde{Z}^x_{t,u}) \le Ca_{n+1}^{-(d+r)}(a(u) - a_{n+1})^{2r/(2r+1)} .$$

$\bullet$ \textbf{Conclusion:} considering \eqref{eq:dTV_X_Y_st}, we get
$$ \dtv(X^{x,n}_t,Y^x_{t,u}) \le C(1+|x|^2)t + Ce^{Ca_{n+1}^{-1}(1+|x|^2)}t^{1/2} + C a_{n+1}^{-(d+r)}(a(u)-a_{n+1})^{2r/(2r+1)} .$$
\end{proof}

\begin{remark}
As in \cite[Theorem 2.3]{bras2021-3}, we could improve the dependency in $|x|$ in \eqref{eq:dtv_X_Y_bar_st} and \eqref{eq:dtv_X_Y_st}, at the expanse of further assumptions on $V$. However it would require to track the dependency in the ellipticity (in $a_n$) in the bounds proved in \cite{menozzi2021}, which rely on Malliavin calculus. We believe that it would considerably increase the length and the technicality of the present article, while bringing no significant improvement to our final results.
\end{remark}

\section{Convergence of the plateau SDE $X_t$ in total variation}
\label{sec:X_convergence}

%Having established the bounds in small time, the rest of the proof of Theorem \ref{thm:main} follows the same strategy as for the proof of the convergence in $L^1$-Wasserstein distance in \cite{bras2021-3}.
In this section, we prove the convergence of the plateau SDE $(X_t)$ defined in \eqref{eq:def_X}.

\subsection{Exponential contraction in total variation}
We first show that the property of exponential contraction that holds for the $L^1$-Wasserstein distance under the setting described in Section \ref{subsec:assumptions} (see \cite[Theorem 4.2]{bras2021-2}) also holds for the total variation distance.
\begin{theorem}
\label{thm:contraction_dTV}
Let $X$ be the solution to
\begin{equation}
\label{eq:X_constant_a}
X_0^x = x, \quad dX_t^x = b_a(X_t^x)dt + a\sigma(X_t^x) dW_t,
\end{equation}
with $a \in (0,A]$ and where $b_a$ is defined in \eqref{eq:def_b}, so that $\nu_a$ defined in \eqref{eq:def_nu} is the unique invariant distribution of $X$ (\cite[Proposition 2.5]{pages2020}). Let $t_0 \in (0,1]$.
Under the assumption \eqref{Eq:eq:V_confluence},
\begin{enumerate}[label=(\alph*)]
\item For every $x$, $y \in \mathbb{R}^d$ and for every $t \ge t_0$ we have
\begin{equation}
\label{eq:contraction_dtv}
\dtv(X^x_t, X^y_t) \le Ca^{-1} e^{C_1/a^2} e^{-\rho_a t}|x-y|, \quad \rho_a := e^{-C_2/a^2} .
\end{equation}
\item For every $x \in \mathbb{R}^d$ and for every $t \ge t_0$ we have
\begin{equation}
\label{eq:contraction_dtv_nu}
\dtv(X^x_t,\nu_a) \le C a^{-1} e^{C_1/a^2} e^{-\rho_a t} \nu_a(|x-\cdot|).
\end{equation}
\end{enumerate}
\end{theorem}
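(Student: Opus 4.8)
The plan is to bootstrap the total variation contraction from the $L^1$-Wasserstein contraction already available for this SDE (\cite[Theorem 4.2]{bras2021-2}), using a single step of "regularization" to convert Wasserstein closeness into total variation closeness. More precisely, I would split the time interval $[0,t]$ into $[0,t-t_0]$ followed by $[t-t_0,t]$. On the first part, apply the $L^1$-Wasserstein exponential contraction: there exist constants such that $\mathcal{W}_1(X^x_{t-t_0}, X^y_{t-t_0}) \le C a^{-1} e^{C_1/a^2} e^{-\rho_a(t-t_0)} |x-y|$, with $\rho_a = e^{-C_2/a^2}$ (the $a$-dependence in the prefactor and rate comes from the ellipticity constant $a\ubar{\sigma}_0$ degenerating as $a \to 0$; this is exactly the bookkeeping carried out in \cite{bras2021-2}). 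On the remaining interval of fixed length $t_0 \le 1$, I would use the smoothing property of the (uniformly elliptic, since $a$ is fixed here) diffusion semigroup: there is a constant $C = C(a)$ such that for all $z, z' \in \mathbb{R}^d$, $\dtv(X^z_{t_0}, X^{z'}_{t_0}) \le C(a) |z - z'|$, i.e. the time-$t_0$ kernel $P^X_{t_0}$ is Lipschitz from $(\mathbb{R}^d, |\cdot|)$ into $(\mathcal{P}(\mathbb{R}^d), \dtv)$. Composing via the Markov property, $\dtv(X^x_t, X^y_t) = \dtv(P^X_{t_0}\!\!\ast\mathcal{L}(X^x_{t-t_0}),\, P^X_{t_0}\!\!\ast\mathcal{L}(X^y_{t-t_0})) \le C(a)\,\mathcal{W}_1(X^x_{t-t_0}, X^y_{t-t_0})$, and plugging in the Wasserstein bound gives (a).

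For the smoothing estimate $\dtv(X^z_{t_0}, X^{z'}_{t_0}) \le C(a)|z-z'|$ I would invoke Aronson-type bounds on the transition density $p^X_a(t_0, z, \cdot)$ and its $z$-gradient (as already used in Section \ref{sec:small_time_bounds} via \cite{friedman}, \cite[Chapter 9, Theorem 7]{friedman}): write $\dtv(X^z_{t_0}, X^{z'}_{t_0}) = \int |p^X_a(t_0,z,y) - p^X_a(t_0,z',y)|\,dy \le |z-z'|\int_0^1\!\!\int |\nabla_z p^X_a(t_0, z+s(z'-z), y)|\,dy\,ds$, and bound the inner integral using $|\nabla_z p^X_a(t_0,z,y)| \le C t_0^{-(d+1)/2} e^{-c|y-z|^2/t_0}$, which integrates to $C t_0^{-1/2} \le C t_0^{-1/2}$, finite since $t_0$ is fixed. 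Here $a$ enters only through fixed constants, so $C(a)$ is some finite quantity; since we only need $t \ge t_0$ and the exponential factor $e^{C_1/a^2}$ already dominates, we can absorb $C(a)$ into the constants of the statement (possibly enlarging $C_1$), and the precise form of $C(a)$ is irrelevant for the final application. Then (b) follows from (a) by integrating $y$ against the invariant measure $\nu_a$ and using the triangle inequality together with invariance, $\dtv(X^x_t, \nu_a) = \dtv\big(X^x_t, \int X^y_t\,\nu_a(dy)\big) \le \int \dtv(X^x_t, X^y_t)\,\nu_a(dy) \le C a^{-1} e^{C_1/a^2} e^{-\rho_a t} \nu_a(|x - \cdot|)$, where convexity of $\dtv$ in its arguments justifies moving the integral outside.

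The main obstacle is the $a$-dependence: one must make sure that after the regularization step the constant $C(a)$ does not reintroduce a worse-than-$e^{C/a^2}$ blow-up as $a\to 0$, i.e. that the regularization happens at the fixed scale $t_0$ where ellipticity is $a\ubar{\sigma}_0$ but the time horizon is order one, so the Aronson constants degrade only polynomially in $a^{-1}$ — this is harmless next to $e^{C_1/a^2}$. A secondary technical point is checking that the $L^1$-Wasserstein contraction of \cite[Theorem 4.2]{bras2021-2} indeed holds with the claimed $a$-dependent constants $C a^{-1} e^{C_1/a^2}$ and rate $\rho_a = e^{-C_2/a^2}$; this is essentially a restatement of that theorem, but one should cite it with the explicit constants, as the whole point of Section \ref{sec:X_convergence} is to propagate these through to $X_t$ by plateaux. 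Everything else — the Markov property, convexity of $\dtv$, the density gradient integral — is routine.
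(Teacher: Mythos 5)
Your overall architecture is exactly the paper's: contract in $\mathcal{W}_1$ on $[0,t-t_0]$ using \cite[Theorem 4.2]{bras2021-2}, then use a one-step regularization on the fixed window $[t-t_0,t]$ to pass from $\mathcal{W}_1$ to $\dtv$, and deduce (b) from (a) by invariance and convexity of $\dtv$. Part (b) and the composition via the Markov property are fine. The difference, and the gap, lies entirely in how you establish the regularization estimate. The paper does \emph{not} go through gradient bounds on the transition density: it shows $[P^X_{t_0}f]_{\textup{Lip}}\le Ca^{-1}\|f\|_\infty$ via the Bismut--Elworthy--Li representation \cite[Proposition 3.1]{pages2020}, $\nabla P^X_{t_0}f(\xi)=t_0^{-1}\mathbb{E}\bigl[f(X^\xi_{t_0})\int_0^{t_0}(a^{-1}\sigma^{-1}(X^\xi_s)Y^\xi_s)^\top dW_s\bigr]$, where the tangent process $Y^\xi$ has second moments bounded uniformly in $a$ (because $\nabla b_a$ and $\nabla\sigma$ are bounded uniformly in $a\in[0,A]$), so the only $a$-dependence is the explicit $a^{-1}$ in the Malliavin weight. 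This is what produces the clean prefactor $Ca^{-1}$ in the statement.

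Your route through $\int|\nabla_z p^X_a(t_0,z,y)|\,dy$ is where the argument does not close. First, the Friedman/Aronson gradient estimates you invoke require bounded (H\"older) coefficients, whereas $b_a$ has linear growth; to apply them you would have to localize the drift or remove it by Girsanov, and the Girsanov weight for the drift $b_a$ against the diffusion $a\sigma$ contributes factors of order $e^{Ca^{-1}(1+|z|^2)}$ depending on the starting point (exactly the factors appearing in Section \ref{sec:small_time_bounds}), which destroys the spatial uniformity of your $C(a)$ and would put an $e^{C(1+|x|^2)/a}$ into \eqref{eq:contraction_dtv} that is not there. Second, your claim that the Aronson constants ``degrade only polynomially in $a^{-1}$'' is precisely the point that is not free: the clean polynomial dependence obtained in Section \ref{sec:small_time_bounds} relies on reducing to a \emph{driftless} martingale $M(\sigma)$ that is independent of $a$ via the deterministic time change of Lemma \ref{lemma:change_time_martingale}; that reduction is unavailable for $X$ because the time change would turn the drift into $a^{-2}b_a$. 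Tracking the ellipticity dependence of density estimates in the presence of the drift is exactly what the Remark after Proposition \ref{prop:dtv_X_Y_st} identifies as the technically costly route the paper avoids. So the obstacle you flag as ``the main obstacle'' is real, and the argument you give does not resolve it; replacing the density-gradient step by the Bismut--Elworthy--Li bound on $[P^X_{t_0}f]_{\textup{Lip}}$ repairs the proof.
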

\begin{proof}
(a) Following \cite[Theorem 4.2]{bras2021-2}, we have
\[ \forall x,y \in \mathbb{R}^d, \ \mathcal{W}_1\left(X^x_t, X^y_t\right) \le C e^{C_1/a^2} |x-y|e^{-\rho_a t}. \]
Let $t \ge t_0$ and let $f : \mathbb{R}^d \to \mathbb{R}$ a Borel bounded function. Then
$$ \mathbb{E}[f(X^x_t)] - \mathbb{E}[f(X^y_t)] = \mathbb{E}[P_{t_0}^X f(X^x_{t-t_0})] - \mathbb{E}[P_{t_0}^X f(X^x_{t-t_0})],$$
where $P^X$ denotes the kernel associated to $X$.
But using \cite[Proposition 3.1]{pages2020} we have for every $z_1$ and $z_2 \in \mathbb{R}^d$,
$$ P_{t_0}^X f(z_2) - P_{t_0}^X f(z_1) = \langle \nabla P_{t_0}^X f(\xi), z_2 - z_1 \rangle = \frac{1}{t_0} \mathbb{E}\left[f(X_t^\xi) \left\langle \int_0^{t_0}(a^{-1}\sigma^{-1}(X^\xi_s)Y^\xi_s)^\top dW_s, z_2 - z_1 \right\rangle \right] ,$$
where $\xi \in (z_1,z_2)$ and $(Y^\xi_s)_{s \ge 0}$ denotes the tangent process of $(X^\xi_s)$, i.e.
\begin{equation}
Y^\xi_0 = I_d, \quad dY^\xi_s = \nabla b_a(X^\xi_s) Y^\xi_s ds + a \nabla \sigma(X^\xi_s) Y^\xi_s \otimes dW_s.
\end{equation}
Since $\nabla \sigma$ and $\nabla b_a$ are bounded (uniformly in $a$), we have
$$ \textstyle \sup_{\xi \in \mathbb{R}^d, s \in [0,t_0]} \mathbb{E}\| Y_s^\xi \|^2_2 < + \infty, $$
where the bound does not depend on $a$. So that
$$ P_{t_0}^X f(z_2) - P_{t_0}^X f(z_1) \le C ||f||_\infty |z_2-z_2|a^{-1} \sup_{\xi \in \mathbb{R}^d, s \in [0,t_0]} \mathbb{E}\| Y_s^\xi \|_2 ,$$
and then $[P_{t_0}^X f]_{\text{Lip}} \le C a^{-1} \|f\| _\infty$. Then we obtain
$$ \dtv(X^x_t, X^y_t) \le Ca^{-1} \mathcal{W}_1(X^x_{t-t_0},X^y_{t-t_0}) \le C a^{-1} e^{C_1/a^2} e^{-\rho_a t} |x-y| .$$

\medskip

\noindent (b) As $\nu_a$ is the invariant distribution of the diffusion \eqref{eq:X_constant_a} we have
\begin{align*}
\dtv\left(X^x_t,\nu_a\right) & \le \int_{\mathbb{R}^d} \dtv\left(X^x_t,X^y_t\right) \nu_a(dy) \le C e^{C_1/a^2} e^{-\rho_a t} \int_{\mathbb{R}^d} |x-y| \nu_a(dy) \\
& \le C e^{C_1/a^2}e^{-\rho_a t} \nu_a(|x-\cdot|) .
\end{align*}
\end{proof}

\subsection{Convergence of the plateau SDE}

Let $(T_n)$ be the time schedule defined in \eqref{eq:def_T_n} and by a slight abuse of notation we define
\begin{equation}
\label{eq:def_a_n}
a_n := a(T_n) = \frac{A}{\sqrt{\log(T_n+e)}} \quad \text{and} \quad \rho_n := \rho_{a_n} = e^{-C_2/a_n^2}.
\end{equation}
We recall that \cite[Lemma 4.3]{bras2021-2}
\begin{equation}
0 \le a_n - a_{n+1} \asymp (n \log^{3/2}(n))^{-1} .
\end{equation}

\begin{proposition}
\label{prop:dtv_nu_an}
Let $\nu_a$, $a \in (0,A]$, be the Gibbs measure defined in \eqref{eq:def_nu}. Assume that $V$ is coercive, that $(x \mapsto |x|^2 e^{-2V(x)/A^2}) \in L^1(\mathbb{R}^d)$ and \eqref{Eq:eq:min_V}. Then for $n \ge 2$,
\begin{equation}
\dtv(\nu_{a_n},\nu_{a_{n+1}}) \le \frac{C}{n \log(n)} .
\end{equation}
Moreover, for every $s$, $t \in [a_{n+1},a_n]$, we have
\begin{equation}
\dtv(\nu_s,\nu_t) \le \frac{C}{n \log(n)} .
\end{equation}
\end{proposition}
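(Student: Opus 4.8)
The plan is to estimate $\dtv(\nu_s, \nu_t)$ via the explicit density formula, using the representation $\dtv(\nu_s,\nu_t) = \int_{\mathbb{R}^d} |d\nu_s/dx - d\nu_t/dx|\, dx$ and then bounding the variation of the density as a function of the parameter. Writing $g_a(x) := e^{-2(V(x)-V^\star)/a^2}$ and $\mathcal{Z}_a = (\int g_a)^{-1}$, we have $\nu_a(dx) = \mathcal{Z}_a g_a(x)\,dx$, and the idea is to differentiate $a \mapsto \mathcal{Z}_a g_a(x)$ in $a$ and integrate. Concretely, $\partial_a g_a(x) = \frac{4(V(x)-V^\star)}{a^3} g_a(x)$, so the $a$-derivative of the density involves terms of the form $\frac{V(x)-V^\star}{a^3} g_a(x)$ together with the corresponding normalization derivative $\partial_a \mathcal{Z}_a = -\mathcal{Z}_a^2 \int \partial_a g_a = -\mathcal{Z}_a \cdot \frac{4}{a^3}\mathbb{E}_{\nu_a}[V-V^\star]$. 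Collecting, one gets
\[
\partial_a \big(\mathcal{Z}_a g_a(x)\big) = \frac{4}{a^3}\,\mathcal{Z}_a g_a(x)\Big((V(x)-V^\star) - \mathbb{E}_{\nu_a}[V-V^\star]\Big),
\]
so that $\int_{\mathbb{R}^d} |\partial_a(\mathcal{Z}_a g_a(x))|\,dx \le \frac{8}{a^3}\,\mathbb{E}_{\nu_a}[V-V^\star]$. Then $\dtv(\nu_s,\nu_t) \le \int_s^t \frac{8}{a^3}\mathbb{E}_{\nu_a}[V-V^\star]\,da$.

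The next step is to control $\mathbb{E}_{\nu_a}[V-V^\star]$ uniformly for small $a$. Under \eqref{Eq:eq:min_V} the measure $\nu_a$ concentrates near $\argmin(V)$ with Hessians positive definite there; a Laplace/Gaussian-approximation argument (of the same type underlying \cite[Theorem 2.1]{hwang1980}) gives $\mathbb{E}_{\nu_a}[V - V^\star] \le C a^2$ for $a \in (0,A]$, the factor $a^2$ reflecting that under $\nu_a$ the typical excursion of $x$ away from a minimizer is of order $a$ and $V - V^\star$ is quadratic there. One must also handle the tails using coercivity of $V$ and $(x\mapsto |x|^2 e^{-2V(x)/A^2})\in L^1$, which guarantees that the far-field contribution is exponentially negligible; this is precisely where those hypotheses enter. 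Feeding $\mathbb{E}_{\nu_a}[V-V^\star]\le Ca^2$ into the integral yields
\[
\dtv(\nu_s,\nu_t) \le \int_s^t \frac{8C a^2}{a^3}\,da = 8C\log(t/s) = 8C\log\!\left(1 + \frac{t-s}{s}\right) \le 8C\,\frac{t-s}{s}.
\]

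Finally I would specialize to $s,t \in [a_{n+1},a_n]$. Here $s \ge a_{n+1}$ is bounded below (away from $0$ for fixed $n$, but more importantly $a_{n+1}\asymp A/\sqrt{\log n}$), and $t - s \le a_n - a_{n+1} \asymp (n\log^{3/2}(n))^{-1}$ by \cite[Lemma 4.3]{bras2021-2}. Therefore
\[
\dtv(\nu_s,\nu_t) \le C\,\frac{a_n - a_{n+1}}{a_{n+1}} \le C\,\frac{(n\log^{3/2} n)^{-1}}{(\log n)^{-1/2}} = \frac{C}{n\log n},
\]
which is the claimed bound; taking $s = a_{n+1}$, $t = a_n$ gives the first assertion as a special case. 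The main obstacle is the concentration estimate $\mathbb{E}_{\nu_a}[V-V^\star] = O(a^2)$: one needs a clean Laplace-type expansion of both $\int g_a$ and $\int (V-V^\star) g_a$ around the finitely many nondegenerate minima, with uniform-in-$a$ control of the remainder and of the tail beyond any fixed ball, which is where \eqref{Eq:eq:min_V}, the boundedness of $\nabla^2 V$ at the minima, coercivity, and the integrability assumption \eqref{eq:def:A} are all used. Everything else is elementary differentiation under the integral sign and arithmetic with the asymptotics of $(a_n)$.
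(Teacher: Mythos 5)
Your argument is correct, and it reaches the same bound by a route that is close in spirit to, but cleaner in packaging than, the paper's. The paper also works from the $L^1$ formula for the densities, but it handles the finite difference directly: it splits $\left|\mathcal{Z}_{a_n}g_{a_n}-\mathcal{Z}_{a_{n+1}}g_{a_{n+1}}\right|$ into a term where only the exponent varies and a term where only the normalizing constant varies, rescales $x\mapsto a_{n+1}x$, and invokes the Laplace-type estimates (B.3)--(B.5) of the companion paper to control both pieces, getting $C(a_n-a_{n+1})/a_n\asymp (n\log n)^{-1}$. You instead differentiate $a\mapsto \mathcal{Z}_a g_a(x)$ and integrate the $L^1$ norm of the derivative over $[s,t]$; your identity
\[
\partial_a\bigl(\mathcal{Z}_a g_a(x)\bigr)=\tfrac{4}{a^3}\,\mathcal{Z}_a g_a(x)\bigl((V(x)-V^\star)-\mathbb{E}_{\nu_a}[V-V^\star]\bigr)
\]
is correct, and the resulting bound $\int_s^t 8a^{-3}\mathbb{E}_{\nu_a}[V-V^\star]\,da\le C(t-s)/s$ is exactly what is needed. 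Two remarks. First, the single analytic input you leave as a sketch, namely $\mathbb{E}_{\nu_a}[V-V^\star]\le Ca^2$ uniformly in $a$, is precisely the content of the estimates the paper imports from its companion ((B.3) and (B.5)): after the rescaling $x\mapsto ay$ the integral $\int e^{-2(V(ay)-V^\star)/a^2}\,a^{-2}(V(ay)-V^\star)\,dy$ is shown to be bounded by dominated convergence using \eqref{Eq:eq:min_V}, coercivity and \eqref{eq:def:A}; so there is no genuine gap, only a dependence on the same external lemma. (You should also say a word justifying differentiation under the integral sign and the exchange $\int_{\mathbb{R}^d}\bigl|\int_s^t\partial_a\rho_a\,da\bigr|dx\le\int_s^t\int_{\mathbb{R}^d}|\partial_a\rho_a|\,dx\,da$, but the required domination is supplied by the same hypotheses.) Second, your formulation has the advantage of delivering the ``moreover'' statement for arbitrary $s,t\in[a_{n+1},a_n]$ in one stroke, whereas the paper's proof is written only for the endpoints and tacitly extends to intermediate values by the same computation.
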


The proof is given in the Appendix \ref{sec:app_proof_dtv}.

We now prove the convergence of the SDE "by plateaux" for the total variation distance.
\begin{theorem}
\label{thm:convergence_X}
Let $X$ be the process defined in \eqref{eq:def_X} and \eqref{eq:def_X:2}. Let $t_0$ be defined as in Theorem \ref{thm:contraction_dTV}.
If $A > \max(\sqrt{(1+\beta^{-1})C_2},$ $\sqrt{(1+\beta)C_1})$ where $C_1$ and $C_2$ are defined in \eqref{eq:contraction_dtv}, then for all $x_0 \in \mathbb{R}^d$ and for all $C_{(T)}' < C_{(T)}$, for all large enough $n \ge n(C_{(T)}')$, on the time schedule $(T_n)$ we have
\begin{equation}
\dtv(X^{x_0}_{T_{n}},\nu_{a_{n}}) \le C a_n^{-1} n^{-1+(\beta+1)C_1/A^2} \exp\left(-(C_{(T)}')^{1-C_2/A^2}(\beta+1)n^{\beta-(\beta+1)C_2/A^2}\right) (1+|x_0|)
\end{equation}
and for every $t \in \mathbb{R}^+ \setminus (\bigcup_{n \ge 1} [T_n, T_n+t_0])$ we have
\begin{equation}
\dtv(X^{x_0}_t,\nu_{a(t)}) \le \frac{C(1+|x_0|)}{t^{(1+\beta)^{-1}-C_1/A^2} \log(t+e)}.
\end{equation}
\end{theorem}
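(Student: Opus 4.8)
The plan is to propagate the exponential contraction in total variation (Theorem \ref{thm:contraction_dTV}) across successive plateaux $[T_k,T_{k+1}]$, losing only a small quantity $\dtv(\nu_{a_k},\nu_{a_{k+1}})$ at each junction (controlled by Proposition \ref{prop:dtv_nu_an}), and then summing the resulting telescoping series. Concretely, on the plateau $[T_n,T_{n+1}]$ the process $X$ solves an autonomous SDE with constant coefficient $a_{n+1}$, so Theorem \ref{thm:contraction_dTV}(b) applies with $a=a_{n+1}$ and the plateau length $T_{n+1}-T_n$. The natural recursion is of the form
\begin{equation*}
\dtv(X^{x_0}_{T_{n+1}},\nu_{a_{n+1}}) \le \dtv(X^{x_0}_{T_{n+1}},P^{X,n}_{T_{n+1}-T_n}\nu_{a_n}) + \dtv(\nu_{a_n},\nu_{a_{n+1}}),
\end{equation*}
where on the first term I would use that $\nu_{a_n}$ is the invariant law of the $a_n$-plateau dynamics started at $T_n$ together with the contraction estimate \eqref{eq:contraction_dtv}, integrated against $\nu_{a_n}(dy)$, to get a factor $C a_{n+1}^{-1} e^{C_1/a_{n+1}^2} e^{-\rho_{n+1}(T_{n+1}-T_n)}$ times the accumulated distance $\dtv(X^{x_0}_{T_n},\nu_{a_n})$ (plus a term from $\nu_{a_n}(|x-\cdot|)$, which is $O(1+|x_0|)$ by moment bounds on $X$). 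This is essentially the ``domino strategy'' of \cite{bras2021-2} transported to the total variation setting, the only genuinely new ingredient relative to the $\mathcal W_1$ proof being the factor $a_{n+1}^{-1}$ coming from the regularization cost $[P^X_{t_0}f]_{\mathrm{Lip}}\le Ca^{-1}\|f\|_\infty$.

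Next I would insert the explicit asymptotics. With $T_n = C_{(T)}n^{1+\beta}$ one has $a_n^2 = A^2/\log(T_n+e) \sim A^2/((1+\beta)\log n)$, hence $e^{C_1/a_n^2} \asymp n^{(1+\beta)C_1/A^2}$ and $\rho_n = e^{-C_2/a_n^2} \asymp (C_{(T)} n^{1+\beta})^{-C_2/A^2}$, while $T_{n+1}-T_n \asymp C_{(T)}(1+\beta)n^\beta$. Therefore the per-plateau contraction factor behaves like $n^{(1+\beta)C_1/A^2}\exp\!\big(-(C_{(T)})^{1-C_2/A^2}(1+\beta)n^{\beta-(1+\beta)C_2/A^2}\big)$; the hypothesis $A>\sqrt{(1+\beta^{-1})C_2}$ is exactly what makes the exponent $\beta-(1+\beta)C_2/A^2$ strictly positive, so these factors decay super-polynomially and the product over $k\le n$ telescopes to a quantity dominated by its last (largest) term. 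The hypothesis $A>\sqrt{(1+\beta)C_1}$ ensures the polynomial prefactor $n^{(1+\beta)C_1/A^2}$ is beaten by a power of $n$ left over from the step gains, which after bookkeeping yields the first displayed bound (I would carry a constant $C_{(T)}'<C_{(T)}$ to absorb the $\asymp$'s and the finitely many small-$n$ terms into the statement ``for $n\ge n(C_{(T)}')$'').

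For the second, continuous-time bound, I would take $t\notin\bigcup_n[T_n,T_n+t_0]$, let $n=N(t)$ so that $t\in(T_n+t_0,T_{n+1})$, and write
\begin{equation*}
\dtv(X^{x_0}_t,\nu_{a(t)}) \le \dtv(X^{x_0}_t, P^{X,n}_{t-T_n}\nu_{a_n}) + \dtv(\nu_{a_n},\nu_{a(t)}),
\end{equation*}
where the second term is $O(1/(n\log n))$ by the ``moreover'' part of Proposition \ref{prop:dtv_nu_an} (since $a(t)\in[a_{n+1},a_n]$), and the first term is handled by the contraction on the current plateau applied over the elapsed time $t-T_n\ge t_0$, started from the near-equilibrium state $X^{x_0}_{T_n}$ whose distance to $\nu_{a_n}$ is controlled by the first part. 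Converting the $n$-indexed bounds to $t$-indexed ones via $n\asymp (t/C_{(T)})^{1/(1+\beta)}$ and $\log(T_n+e)\asymp(1+\beta)\log n\asymp \log(t+e)$ gives the stated $t^{(1+\beta)^{-1}-C_1/A^2}\log(t+e)$ denominator. The main obstacle I anticipate is purely bookkeeping: tracking the interplay of the three competing $n$-dependent factors (the polynomial blow-up $e^{C_1/a_n^2}$, the stretched-exponential gain $e^{-\rho_n(T_{n+1}-T_n)}$, and the $a_n^{-1}\asymp\sqrt{\log n}$ regularization cost) uniformly across all plateaux, and verifying that the telescoped sum is indeed dominated by its last term rather than by an intermediate one — this is where the precise thresholds on $A$ in terms of $C_1,C_2,\beta$ get used, and where a constant $C_{(T)}'<C_{(T)}$ must be introduced to make the geometric-type summation rigorous.
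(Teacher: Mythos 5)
Your high-level architecture (contraction on each plateau, junction errors between consecutive Gibbs measures, the asymptotics of $e^{C_1/a_n^2}$, $\rho_n$ and $T_{n+1}-T_n$, and the role of the two thresholds on $A$) is in the right spirit, and your exponent computations are correct. But the recursion you set up cannot produce the stated bounds, for two concrete reasons. First, in your decomposition the junction term $\dtv(\nu_{a_n},\nu_{a_{n+1}})\asymp (n\log n)^{-1}$ enters \emph{additively, with no contraction factor in front of it}. Whatever telescoping you then perform, the resulting bound at time $T_{n+1}$ is at least of order $(n\log n)^{-1}$, which is only polynomially small, whereas the first assertion of the theorem is stretched-exponentially small in $n$ (the exponent $\beta-(\beta+1)C_2/A^2$ is positive under the hypothesis on $A$). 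So your claim that the telescoped sum is ``dominated by its last term'' is incompatible with the bound you are trying to prove. The paper avoids this by noting that the plateau $[T_n,T_{n+1}]$ runs the $a_{n+1}$-dynamics (not $a_n$, as you write), whose invariant law is exactly $\nu_{a_{n+1}}$; the whole error is therefore a \emph{single} contraction over the full last plateau applied to the pair $(\mathrm{Law}(X^{x_0}_{T_n}),\nu_{a_{n+1}})$, so the junction error appears as $\mathcal{W}_1(\nu_{a_n},\nu_{a_{n+1}})$ \emph{inside} the contraction, i.e. multiplied by $a_{n+1}^{-1}\mu_{n+1}$. Proposition \ref{prop:dtv_nu_an} is only needed to pass from $\nu_{a_{n+1}}$ to $\nu_{a(t)}$ at intermediate times.

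Second, the contraction \eqref{eq:contraction_dtv} bounds $\dtv(X^x_t,X^y_t)$ by a multiple of $|x-y|$, so after integrating over a coupling of the initial laws the right-hand side is $\mathcal{W}_1(X^{x_0}_{T_n},\nu_{a_{n+1}})$, \emph{not} $\dtv(X^{x_0}_{T_n},\nu_{a_n})$; since $\mathcal{W}_1$ is not dominated by $\dtv$, your TV-to-TV recursion does not close. In fact the paper's argument is not a recursion in total variation at all: it applies the TV contraction exactly once, on the final plateau, to $\mathcal{W}_1(X^{x_0}_{T_n},\nu_{a_n})+\mathcal{W}_1(\nu_{a_n},\nu_{a_{n+1}})$, and the first of these is imported from the companion paper (\cite[Theorem 5.1]{bras2021-2}), where the domino/telescoping was carried out in $\mathcal{W}_1$ and yields $O((1+|x_0|)/(n\log^{3/2}n))$. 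Replacing that input, as you do, by the crude moment bound $\nu_{a_n}(|x-\cdot|)=O(1+|x_0|)$ loses the factor $n^{-1}$ in the first assertion and destroys the second one: for $t\in(T_n+t_0,T_{n+1})$ the contraction may run only for a time of order $t_0$, so $e^{-\rho_{n+1}(t-T_n)}$ gives essentially no gain and the entire decay $t^{-(1+\beta)^{-1}}$ in the continuous-time bound comes from the imported $\mathcal{W}_1$ estimate at $T_n$. Your handling of $\dtv(\nu_{a_{n+1}},\nu_{a(t)})$ via the ``moreover'' part of Proposition \ref{prop:dtv_nu_an} is, by contrast, consistent with the paper.
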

\begin{proof}
For fixed $x \in \mathbb{R}^d$ and using Theorem \ref{thm:contraction_dTV}, we have for every bounded Borel function $f : \mathbb{R}^d \to \mathbb{R}$,
$$ \mathbb{E}[f(X^{x,n}_{T_{n+1}-T_n})] - \mathbb{E}[f(Z_{a_{n+1}})] \le Ca_{n+1}^{-1}e^{C_1/a_{n+1}^2}e^{-\rho_{a_{n+1}}(T_{n+1}-T_n)} \|f\|_\infty \mathbb{E}|x-Z_{a_{n+1}}| ,$$
where $Z_{a_{n+1}} \sim \nu_{a_{n+1}}$. Now integrating $x$ with respect to the law of $X^{x_0}_{T_n}$ yields
\begin{align}
\dtv(X^{x_0}_{T_{n+1}},\nu_{a_{n+1}}) & \le C a_{n+1}^{-1}e^{C_1/a_{n+1}^2}e^{-\rho_{a_{n+1}}(T_{n+1}-T_n)} \left( \mathcal{W}_1(X^{x_0}_{T_n},\nu_{a_{n}}) + \mathcal{W}_1(\nu_{a_{n+1}},\nu_{a_n})\right) \nonumber \\
& \le C \frac{a_{n+1}^{-1}\mu_{n+1}}{n\log^{3/2}(n)}(1+|x_0|), \nonumber \\
\label{eq:def_mu}
& \mu_n := e^{C_1/a_{n+1}^2}e^{-\rho_{a_{n+1}}(T_{n+1}-T_n)}
\end{align}
where we used \cite[Theorem 5.1]{bras2021-2} and \cite[Proposition 4.4]{bras2021-2}. We use the bound on $\mu_n$ given by \cite[(5.5)]{bras2021-2}. Then to bound $\dtv(X^{x_0}_t, \nu_{a_{n+1}})$ for any $t \in (T_n+t_0, T_{n+1})$, we apply Theorem \eqref{thm:contraction_dTV} on the time interval $[T_n, t]$ which length is not smaller than $t_0$ and we conclude as in the proof of \cite[Theorem 5.1]{bras2021-2}.
\end{proof}

\begin{remark}
The condition that $t$ does not belong in any interval $[T_n,T_n+t_0]$ is a technical condition which is specific to our strategy of proof. However this condition is not a problem for the convergence of $Y_t$ and $\bar{Y}_t$ since for these two processes, the time schedule $(T_n)$ is only a tool for the proof.
\end{remark}

\section{Convergence of $Y_t$ in total variation}
\label{sec:Y_convergence}

We now consider $(Y_t)$ as defined in \eqref{eq:def_Y} with extended definition \eqref{eq:def_Y:2}.

\subsection{Preliminary lemmas}

\begin{lemma}
\label{lemma:D.1a:exp}
Let $\lambda \in \mathbb{R}^+$. There exists $C>0$ such that for every $n \ge 0$, $u \ge 0$ and every $x \in \mathbb{R}^d$:
\begin{equation}
\sup_{t \ge 0} \mathbb{E} \left[e^{\lambda |X^{x,n}_t|^2}\right] \le Ce^{\lambda |x|^2} \ \text{ and } \ \sup_{t \ge 0} \mathbb{E} \left[e^{\lambda |Y^x_{t,u}|^2}\right] \le Ce^{\lambda|x|^2} .
\end{equation}
\end{lemma}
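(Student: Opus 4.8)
The plan is a Lyapunov-function argument applied to the exponential test function $\phi(y):=e^{\lambda|y|^2}$. First I would record the strict dissipativity of the drift that follows from \eqref{Eq:eq:V_confluence}: fixing some $y_0\in\textbf{B}(0,R_0)^c$, expanding $\langle(\sigma\sigma^\top\nabla V)(x)-(\sigma\sigma^\top\nabla V)(y_0),x-y_0\rangle\ge\alpha_0|x-y_0|^2$ and using the at most linear growth of $\sigma\sigma^\top\nabla V$ (a consequence of \eqref{Eq:eq:V_assumptions} and the boundedness of $\sigma$) together with the boundedness of $\Upsilon$, one gets constants $\tilde\alpha>0$ and $C_0\ge0$, \emph{independent of} $a\in[0,A]$, such that
\[
\forall\,a\in[0,A],\ \forall\,x\in\mathbb{R}^d,\qquad \langle x,b_a(x)\rangle\le-\tilde\alpha|x|^2+C_0
\]
(for $|x|$ large this comes from the above expansion; on a compact set it follows from the linear growth of $b_a$ uniformly in $a$, see \eqref{eq:def_b}).

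Next, since $\nabla\phi(y)=2\lambda y\,\phi(y)$ and $\nabla^2\phi(y)=(2\lambda I_d+4\lambda^2 yy^\top)\phi(y)$, Itô's formula applied along \eqref{eq:def_Y:2}, with $a_t:=a(t+u)$, gives
\[
d\phi(Y^x_{t,u})=\Big(2\lambda\langle Y^x_{t,u},b_{a_t}(Y^x_{t,u})\rangle+\lambda a_t^2\,\tr(\sigma\sigma^\top(Y^x_{t,u}))+2\lambda^2 a_t^2\,(Y^x_{t,u})^\top\sigma\sigma^\top(Y^x_{t,u})\,Y^x_{t,u}\Big)\phi(Y^x_{t,u})\,dt+dM_t,
\]
where $M$ is a local martingale. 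Using the dissipativity estimate, $a_t\le A$ and the boundedness of $\sigma\sigma^\top$, the bracket is at most $(-2\lambda\tilde\alpha+2\lambda^2 A^2\|\sigma\sigma^\top\|_\infty)|y|^2+C_1$ at $y=Y^x_{t,u}$. Provided the leading coefficient $\kappa_0:=2\lambda\tilde\alpha-2\lambda^2 A^2\|\sigma\sigma^\top\|_\infty$ is positive — this is the delicate point, and it holds in the range of $\lambda$ relevant here — the elementary inequality $(-\kappa_0 s+C_1)e^{\lambda s}\le-\tfrac{\kappa_0}{2}e^{\lambda s}+C_2$ for all $s\ge0$ (valid because $s\mapsto e^{\lambda s}(-\tfrac{\kappa_0}{2}s+C_1)$ is bounded above on $\mathbb{R}^+$) yields, for the time-dependent generator $\mathcal{L}_t$ of $Y^x_{\cdot,u}$, the exponential drift inequality $\mathcal{L}_t\phi(y)\le-\tfrac{\kappa_0}{2}\phi(y)+C_2$, uniformly in $t$ and $u$.

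To conclude, introduce the stopping times $\tau_R:=\inf\{t\ge0:|Y^x_{t,u}|\ge R\}$; taking expectations in the Itô formula stopped at $\tau_R$ removes the martingale and gives $\mathbb{E}[\phi(Y^x_{t\wedge\tau_R,u})]\le\phi(x)+\int_0^t(-\tfrac{\kappa_0}{2}\mathbb{E}[\phi(Y^x_{s\wedge\tau_R,u})]+C_2)\,ds$, so Gronwall's lemma yields $\mathbb{E}[\phi(Y^x_{t\wedge\tau_R,u})]\le\phi(x)+2C_2/\kappa_0$ for all $t,u,R$. Letting $R\to\infty$ by Fatou's lemma and using $e^{\lambda|x|^2}\ge1$ gives $\sup_{t\ge0}\mathbb{E}[e^{\lambda|Y^x_{t,u}|^2}]\le(1+2C_2/\kappa_0)e^{\lambda|x|^2}$, with a constant independent of $u$ and $x$. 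The bound for $X^{x,n}$ is obtained verbatim with $a_t$ replaced by the constant $a_{n+1}\in[0,A]$, and is uniform in $n$ since $\tilde\alpha$ and $C_0$ do not depend on $a$. The only non-routine step is obtaining the strictly negative exponential drift in the previous paragraph, i.e.\ ensuring that the second-order contribution $2\lambda^2 a_t^2\,y^\top\sigma\sigma^\top y$ is dominated by the dissipative term $-2\lambda\tilde\alpha|y|^2$; everything else (Itô's formula, localisation, Gronwall) is standard.
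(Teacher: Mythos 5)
Your strategy --- It\^o's formula applied to $\phi(y)=e^{\lambda|y|^2}$, a drift inequality $\langle x,b_a(x)\rangle\le-\tilde\alpha|x|^2+C_0$ deduced from \eqref{Eq:eq:V_confluence} and the linear growth of $\sigma\sigma^\top\nabla V$, then localisation and Gronwall --- is exactly the paper's strategy, and those routine parts are fine. The genuine gap is precisely the point you flag and then defer: you control the second-order It\^o term by the crude bound $a_t\le A$, which forces the sign condition $\kappa_0=2\lambda\tilde\alpha-2\lambda^2A^2\|\sigma\sigma^\top\|_\infty>0$, i.e.\ $\lambda<\tilde\alpha/(A^2\|\sigma\sigma^\top\|_\infty)$. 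The lemma is stated for \emph{every} $\lambda\in\mathbb{R}^+$, and it is subsequently invoked with $\lambda$ of order $a_{n+1}^{-1}\to\infty$ (in the bounds on $(c^{\text{end}})$ in Sections \ref{sec:Y_convergence} and \ref{sec:Y_bar_convergence}), so ``the range of $\lambda$ relevant here'' is not small and the restriction cannot be waved away.

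The paper does not discard the vanishing factor at this step: it keeps the coefficient $2\lambda^2 a_{n+1}^2\,y^\top\sigma\sigma^\top(y)\,y$ (resp.\ $a(u+t)^2$ for $Y$) and uses $a_n\to 0$, so that for any \emph{fixed} $\lambda$ the quadratic term is dominated by the dissipative term $-2\lambda\tilde\alpha|y|^2$ once $n$ (equivalently, once the elapsed time) is large enough that $\lambda a_{n+1}^2\|\sigma\sigma^\top\|_\infty<\tilde\alpha$; the finitely many initial plateaux, i.e.\ a bounded initial time window, are then handled by a separate argument, for which the paper refers to \cite[Lemmas 6.1 and 7.1]{bras2021-2}. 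To repair your proof you would need to perform this same splitting: your Gronwall argument goes through verbatim on the regime where $\lambda a_t^2$ is small, but the initial regime requires its own treatment and is not innocuous --- for $a$ of order $A$ and $\lambda$ large, a dissipative diffusion need not even have finite exponential moments of order $\lambda$ (an Ornstein--Uhlenbeck computation shows this), so this is where the real work, or the appeal to the companion paper, lies.
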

\begin{proof}[Sketch of proof]
By It\=o's Lemma, we have for $k \ge n$ and for $t \in [T_k-T_n,T_{k+1}-T_n)$:
\begin{align*}
d\left(e^{\lambda|X_t^{x,n}|^2}\right) & = \lambda e^{\lambda|X_t^{x,n}|^2} \left(2\langle X_t^{x,n}, dX_t^{x,n} \rangle + d\langle X^{x,n} \rangle_t \right) + 2\lambda^2 |X_t^{x,n}|^2 e^{\lambda|X_t^{x,n}|^2} d\langle X^{x,n}\rangle_t \\
& = \lambda e^{\lambda|X_t^{x,n}|^2} \Big( -2\langle \sigma \sigma^\top \nabla V(X_t^{x,n}), X_t^{x,n} \rangle dt + 2a_{n+1}^2 \langle X_t^{x,n}, \Upsilon(X_t^{x,n}) \rangle dt \\
& \quad + 2a_{n+1} \langle X_t^{x,n}, \sigma(X_t^{x,n}) dW_t \rangle + a_{n+1}^2 \tr(\sigma\sigma^\top(X_t))dt \Big) \\
& \quad	+ 2\lambda^2 e^{\lambda|X_t^{x,n}|^2} a_{n+1}^2 (X_t^{x,n})^\top \sigma \sigma^\top(X_t^{x,n}) X_t^{x,n} dt
\end{align*}
the "dominating" term is $-\langle \sigma \sigma^\top \nabla V(X_t^{x,n}), X_t^{x,n} \rangle dt$ which makes $\mathbb{E} [e^{\lambda |X^{x,n}_t|^2}]$ decrease. Using assumption \eqref{Eq:eq:V_confluence}, we have for $|X_t^{x,n}|$ large enough,
$$ -\langle \sigma \sigma^\top \nabla V(X_t^{x,n}), X_t^{x,n} \rangle \le -C\ubar{\sigma}_0^2 \alpha_0 |X_t^{x,n}|^2. $$
Moreover, using the facts that $\Upsilon$ and $\sigma$ are bounded, that $a_n \to 0$, that $|\nabla V| \le CV^{1/2}$ and that $\sigma \sigma^\top \ge \ubar{\sigma}_0^2 I_d$, for large enough $|X^{x,n}_t|$ and large enough $n$, the coefficient in $dt$ in the last equation is negative. We deal with the cases where $|X^{x,n}_t|$ is not large enough or where $n$ is not large enough the same way as in the proof of \cite[Lemma 6.1]{bras2021-2} and \cite[Lemma 7.1]{bras2021-2}, where more details can be found.

The proof is the same for $Y$, replacing $a_{k+1}$ by $a(u+t)$.
\end{proof}

\begin{proposition}
\label{prop:3.6:Y_dTV}
Let $T$, $\bar{\gamma}>0$. There exists $C>0$ such that for every Borel bounded function $f : \mathbb{R}^d \to \mathbb{R}$ and every $t \in (0,T]$, for all $n \ge 0$, for all $\gamma < \bar{\gamma}$ such that $u \in [T_n,T_{n+1}]$ and $u+t+\gamma \in [T_n,T_{n+1}]$,
\begin{equation}
\left| \mathbb{E}\left[P_t^{X,n} f(Y_{\gamma,u}^x)\right]  - \mathbb{E}\left[P_t^{X,n} f(X_\gamma^{x,n})\right] \right| \le C a_{n+1}^{-2}(a_n-a_{n+1}) \|f\|_\infty \gamma t^{-1} V(x).
\end{equation}
\end{proposition}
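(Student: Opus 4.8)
The strategy is to exploit the smoothing of the kernel $P_t^{X,n}$: even though $f$ is only bounded, $P_t^{X,n} f$ is a Lipschitz (indeed $\mathcal{C}^1$) function, with a Lipschitz constant that blows up like $t^{-1/2}$ as $t \to 0$ because of the ellipticity of $\sigma$ (uniform in $x$, but with the $a_{n+1}$-dependence that must be tracked). Concretely, by the Bismut--Elworthy--Li formula as used in the proof of Theorem \ref{thm:contraction_dTV} (via \cite[Proposition 3.1]{pages2020}), one gets $[P_t^{X,n} f]_{\textup{Lip}} \le C a_{n+1}^{-1} t^{-1/2} \|f\|_\infty$, the $a_{n+1}^{-1}$ coming from the $a^{-1}\sigma^{-1}$ factor and the $t^{-1/2}$ from integrating the tangent-process/stochastic-integral term over $[0,t]$. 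Then $P_t^{X,n} f$ plays the role that a test function plays in the $L^1$-Wasserstein estimates, so the problem reduces to bounding a Wasserstein-type quantity between $Y^x_{\gamma,u}$ and $X^{x,n}_\gamma$ on the short interval of length $\gamma$, where the only difference between the two processes is that $Y$ uses the schedule $a(u+\cdot)$ while $X$ uses the frozen value $a_{n+1}$.

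First I would write, with $g := P_t^{X,n} f$,
\begin{align*}
\bigl| \mathbb{E}[g(Y^x_{\gamma,u})] - \mathbb{E}[g(X^{x,n}_\gamma)] \bigr| \le [g]_{\textup{Lip}} \, \mathbb{E}\bigl| Y^x_{\gamma,u} - X^{x,n}_\gamma \bigr| \le C a_{n+1}^{-1} t^{-1/2} \|f\|_\infty \, \bigl\| Y^x_{\gamma,u} - X^{x,n}_\gamma \bigr\|_1 ,
\end{align*}
using the same Brownian motion $W$ to drive both SDEs. Then I would estimate $\| Y^x_{\gamma,u} - X^{x,n}_\gamma \|_1$ exactly as in \cite[Lemma 6.2]{bras2021-2}: subtract the two SDEs, use that $b_a$ and $\sigma$ are Lipschitz uniformly in $a$, that $|b_{a(u+s)} - b_{a_{n+1}}| \le C(a_{n+1}^2 - a(u+s)^2)|\Upsilon| \le C(a_n - a_{n+1})$ on the relevant interval (since $a$ is non-increasing and $u, u+s \in [T_n,T_{n+1}]$), and that $\|a(u+s)\sigma - a_{n+1}\sigma\|_\infty \le C(a_n - a_{n+1})$, then apply Gronwall over a time interval of length $\gamma \le \bar\gamma$ bounded. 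The drift discrepancy contributes $C(a_n-a_{n+1})\gamma$ and the diffusion discrepancy contributes $C(a_n-a_{n+1})\gamma^{1/2}$ (via the $L^2$ isometry and Cauchy--Schwarz); since $\gamma \le \bar\gamma$, the dominating term for small $\gamma$ is $C(a_n-a_{n+1})\gamma^{1/2}$, and combined with the $t^{-1/2}$ factor this already nearly gives the claimed bound — but note the statement asks for $\gamma t^{-1}$, not $\gamma^{1/2} t^{-1/2}$, and carries a $V(x)$ rather than a polynomial-in-$|x|$ factor and an extra $a_{n+1}^{-1}$.

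To land exactly on $C a_{n+1}^{-2}(a_n-a_{n+1})\|f\|_\infty \gamma t^{-1} V(x)$ I would refine two points. For the $V(x)$ factor and the correct power of $\gamma$: when controlling the diffusion-coefficient discrepancy one should not just use its sup norm but rather $\|(a(u+s)-a_{n+1})\sigma(X_s)\|$, and more importantly one can absorb one factor of $\gamma^{1/2}$ by pairing it against a further $t^{-1/2}$ obtained from a second integration by parts / a sharper form of the BEL estimate, or by using that the difference of the two diffusion coefficients is itself $O(a_n-a_{n+1})$ so that after squaring and integrating over $[0,\gamma]$ one obtains $(a_n-a_{n+1})^2\gamma$, whose square root is $(a_n-a_{n+1})\gamma^{1/2}$; the extra $a_{n+1}^{-1}$ then comes from yet another use of $\sigma^{-1}$ when transferring the diffusion perturbation into a drift perturbation via Girsanov, as is done in \cite{bras2021-3}. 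The factor $V(x)$ replaces the naive $(1+|x|)$ because $1 + |b_a(x)| \le C V^{1/2}(x)$ and, after Gronwall, the moment bounds of Lemma \ref{lemma:D.1a:exp} (or simply $\mathbb{E} V(X^{x,n}_s) \le C V(x)$, which follows from $|\nabla V|^2 \le CV$) control the accumulated error by $CV(x)$. The main obstacle is precisely this bookkeeping: getting the powers of $a_{n+1}$, $\gamma$ and $t$ to match the stated exponents simultaneously, which forces a careful choice between (i) transferring the diffusion perturbation to a drift perturbation by Girsanov at the cost of $a_{n+1}^{-1}$ and gaining a clean $\gamma$, versus (ii) keeping it as a martingale term and only getting $\gamma^{1/2}$; I expect the proof follows route (i), mirroring the small-time arguments of Section \ref{sec:small_time_bounds}, so that all the genuinely hard analysis (Aronson bounds, the $a_n$-dependence of constants) is imported rather than redone.
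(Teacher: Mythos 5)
Your first-order route does not reach the stated bound, and the gap is structural rather than a matter of bookkeeping. Pairing $[P_t^{X,n}f]_{\textup{Lip}} \le C a_{n+1}^{-1}t^{-1/2}\|f\|_\infty$ with the strong ($L^1$) estimate $\|Y^x_{\gamma,u}-X^{x,n}_\gamma\|_1 \le C(a_n-a_{n+1})\gamma^{1/2}$ gives at best $a_{n+1}^{-1}(a_n-a_{n+1})\gamma^{1/2}t^{-1/2}$, and the exponent on $\gamma$ cannot be repaired afterwards: in the domino sum of Section \ref{sec:Y_convergence} the term $(b)$ is $\sum_k \gamma_k (T_{n+1}-\Gamma_k)^{-1}$, which converges to a logarithm precisely because the bound is linear in $\gamma$; a $\gamma^{1/2}t^{-1/2}$ bound would make that sum blow up like $\gamma_{N(T_{n+1})}^{-1/2}$. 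Your proposed fix (i) — transferring the diffusion discrepancy into a drift discrepancy by Girsanov — does not work here either: Girsanov only reweights the drift and cannot identify in law two diffusions whose diffusion coefficients ($a(u+s)\sigma$ versus $a_{n+1}\sigma$) genuinely differ; that device is used in Section \ref{sec:small_time_bounds} only to remove a drift, never to absorb a change of diffusion coefficient.

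What the paper actually does is a second-order (weak-error) argument: it applies the perturbation estimate \cite[Proposition 6.4]{bras2021-2} to $g_t := P_t^{X,n}f$, in which the diffusion-coefficient difference enters through $a^2(u+s)-a_{n+1}^2 = O(a_n-a_{n+1})$ multiplied by the \emph{second} derivative of the test function, integrated over $[0,\gamma]$ — this is how one gains a full power of $\gamma$ instead of $\gamma^{1/2}$. The required smoothing estimate is second order, $\Phi_{g_t}(x) \le C\|f\|_\infty a_{n+1}^{-2}t^{-1}(\cdots)$ from \cite[Proposition 3.2(b)]{pages2020} (controlling $\nabla^2 P_t^{X,n}f$, not just $\nabla P_t^{X,n}f$), which is exactly where the factors $a_{n+1}^{-2}$ and $t^{-1}$ in the statement come from; the $V(x)$ factor then arises from the moment bounds as in \cite[Proposition 6.5]{bras2021-2}, as you anticipated. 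So the missing idea is the Taylor/Itô expansion to second order against $g_t$ together with the second-derivative heat-kernel bound; without it your argument proves a genuinely weaker and, for the intended application, insufficient estimate.
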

\begin{proof}
We apply \cite[Proposition 6.4]{bras2021-2} to $g_t := P_t^{X,n} f$ with $t>0$. Following \cite[Proposition 3.2(b)]{pages2020}, we have
$$ \Phi_{g_t}(x) \le C \|f\|_\infty a_{n+1}^{-2} t^{-1} \max\left(V^{1/2}(x), \left|\left|\sup_{\xi \in (X^{x,n}_\gamma, Y_{\gamma,u}^x)} V^{1/2}(\xi) \right|\right|_2, V^{1/2}(x)\left|\left|\sup_{\xi \in (x, X_\gamma^{x,n})} V^{1/2}(\xi) \right|\right|_2 \right). $$
We conclude as in the proof of \cite[Proposition 6.5]{bras2021-2}.

%But following \eqref{Eq:eq:V_assumptions}, $\nabla V/V^{1/2}$ is bounded so $V^{1/2}$ is Lipschitz-continuous and then
%\begin{align*}
%& \left|\left|\sup_{\xi \in (x, X_\gamma^{x,n})} V^{1/2}(\xi) \right|\right|_2 \le \left|\left| V^{1/2}(x) + [V^{1/2}]_{\text{Lip}} |X_\gamma^{x,n}-x| \right|\right|_2 \le CV^{1/2}(x) \\
%& \left|\left|\sup_{\xi \in (X^{x,n}_\gamma, Y_{\gamma,u}^x)} V^{1/2}(\xi) \right|\right|_2 \le \left|\left| V^{1/2}(x) + [V^{1/2}]_{\text{Lip}} \max(|X^{x,n}_\gamma-x|, |Y^x_{\gamma,u}-x|) \right|\right|_2 \le CV^{1/2}(x),
%\end{align*}
%where we used \cite[Lemmas 6.2 and 6.3]{bras2021-2}. We thus obtain the desired result.
\end{proof}

%We now follow the lines of Section \ref{subsec:proof_Y}. We use Corollary \ref{cor:gibbs} for the total variation, which yields a supplementary factor $a_{n+1}^{-1}$ and we use Proposition \ref{prop:3.6:Y_dTV} instead of Proposition \ref{prop:3.6:Y}, yielding $t^{-1}$ instead of $t^{-1/2}$.

\subsection{Proof of Theorem \ref{thm:main}(a)}
\label{subsec:proof_Y}

More precisely, we prove that for all $\beta >0$, if
\begin{equation}
\label{eq:A_condition_Y}
A > \max\left(\sqrt{(\beta+1)(2C_1+C_2)}, \sqrt{(1+\beta^{-1})C_2} \right) ,
\end{equation}
then
\begin{equation}
\dtv\left(Y^{x_0}_t, \nu_{a(t)}\right) \le \frac{Ce^{C\sqrt{\log(t)}(1+|x_0|^2)}}{t^{(1+\beta)^{-1}-(2C_1+C_2)/A^2} }.
\end{equation}
\begin{proof}
We follow the proof of \cite[Theorem 2.1(b)]{bras2021-2} in \cite[Section 7.3]{bras2021-2} based on a domino strategy with respect to some decreasing step sequence $(\gamma_n)$, even though $Y$ is not an Euler-Maruyama scheme. In this case, the step sequence $(\gamma_n)$ is only a tool for the proof. This way we can choose freely the sequence $(\gamma_n)$ in this section.
We use Theorem \ref{thm:contraction_dTV} in place of \cite[Theorem 4.2]{bras2021-2} and Proposition \ref{prop:3.6:Y_dTV} in place of \cite[Proposition 7.4]{bras2021-2}.
For $f:\mathbb{R}^d \to \mathbb{R}$ bounded measurable
%we split $|\mathbb{E}f(X^{x,n}_{T_{n+1}-T_n}) - \mathbb{E}f(Y^x_{T_{n+1}-T_n, T_n})|$ into four terms $(c^{\text{init}})$, $(a)$, $(b)$, $(c^{\text{end}})$. We have
and for $x \in \mathbb{R}^d$ we write
\begin{align*}
& \left| \mathbb{E}f(X_{T_{n+1}-T_n}^{x,n}) - \mathbb{E}f(Y_{T_{n+1}-T_n,T_n}^{x})\right| \le \left|(P^{Y}_{\gamma^{\text{init}},T_n} - P^{X,n}_{\gamma^{\text{init}}}) \circ P^{X,n}_{T_{n+1}-\Gamma_{N(T_n)+1}} f(x)\right| \\
& + \sum_{k=N(T_n)+2}^{N(T_{n+1}-T)} \left| P^{Y}_{\gamma^{\text{init}},T_n} \circ P^{Y}_{\gamma_{N(T_n)+2},\Gamma_{N(T_n)+1}} \circ \cdots \circ P^{Y}_{\gamma_{k-1},\Gamma_{k-2}} \circ (P^{Y}_{\gamma_{k},\Gamma_{k-1}} - P^{X,n}_{\gamma_k}) \circ P^{X,n}_{T_{n+1} - \Gamma_k} f(x) \right| \\
& + \sum_{k=N(T_{n+1}-T)+1}^{N(T_{n+1})-1} \left| P^{Y}_{\gamma^{\text{init}},T_n} \circ P^{Y}_{\gamma_{N(T_n)+2},\Gamma_{N(T_n)+1}} \circ \cdots \circ P^{Y}_{\gamma_{k-1},\Gamma_{k-2}} \circ (P^{Y}_{\gamma_{k},\Gamma_{k-1}} - P^{X,n}_{\gamma_k}) \circ P^{X,n}_{T_{n+1} - \Gamma_k} f(x) \right| \\
& {+} \left| P^{Y}_{\gamma^{\text{init}},T_n} {\circ} P^{Y}_{\gamma_{N(T_n)+2},\Gamma_{N(T_n)+1}} {\circ} \cdots {\circ} P^{Y}_{\gamma_{N(T_{n+1})-1},\Gamma_{N(T_{n+1})-2}} {\circ} (P^{Y}_{\gamma^{\text{end}}+\gamma_{N(T_{n+1})},\Gamma_{N(T_{n+1})-1}} {-} P^{X,n}_{\gamma^{\text{end}}+\gamma_{N(T_{n+1})}}) f(x)\right| \\
& =: (c^{\text{init}}) + (a) + (b) + (c^{\text{end}}),
\end{align*}
where
$$\gamma^{\text{init}} := \Gamma_{N(T_n)+1}-T_n \le \gamma_{N(T_n)+1} \quad \text{ and } \quad \gamma^{\text{end}} := T_{n+1}-\Gamma_{N(T_{n+1})} \le \gamma_{N(T_{n+1})+1}. $$
Then we have
\begin{align*}
(a) & \le C a_{n+1}^{-3} e^{C_1 a_{n+1}^{-2}} e^{-\rho_{n+1} T_{n+1}} \|f\|_\infty V(x) (a_n-a_{n+1}) \sum_{k=N(T_n)+2}^{N(T_{n+1}-T)} \gamma_k e^{\rho_{n+1} \Gamma_k}  \\
& \le C a_{n+1}^{-3} e^{C_1 a_{n+1}^{-2}} \|f\|_\infty (a_n - a_{n+1}) V(x) \rho_{n+1}^{-1} .
\end{align*}
We obtain likewise
$$ (c^{\text{init}}) \le C a_{n+1}^{-3} e^{-\rho_{n+1}(T_{n+1}-T_n)} \|f\|_\infty (a_n - a_{n+1}) \gamma_{N(T_n)} V(x) .$$
Applying Proposition \ref{prop:3.6:Y_dTV} yields
\begin{align*}
(b) & \le C a_{n+1}^{-2} (a_n - a_{n+1}) \|f\|_\infty V(x) \sum_{k=N(T_{n+1}-T)+1}^{N(T_{n+1})-1} \frac{\gamma_k}{T_{n+1} - \Gamma_k} \\
& \le C a_{n+1}^{-2} (a_n - a_{n+1}) \|f\|_\infty V(x) \log(1/\gamma_{N(T_{n+1})}).
\end{align*}
Applying Proposition \ref{prop:dtv_X_Y_st} with $r=1$ along with Lemma \ref{lemma:D.1a:exp} yields
$$ (c^{\text{end}}) \le C \|f\|_\infty \left( e^{Ca_{n+1}^{-1}(1+|x|^2)} \gamma_{N(T_n)}^{1/2} + a_{n+1}^{-(d+1)}(a(T_{n+1}-\gamma_{N(T_{n+1})}) -a_{n+1})^{2/3}\right). $$
But we have
$$ a(T_{n+1}{-}\gamma_{N(T_{n+1})}) -a_{n+1} = a(T_{n+1}{-}\gamma_{N(T_{n+1})}) -a(T_{n+1}) \le C\frac{da}{dt}(T_{n+1}) \cdot \gamma_{N(T_{n+1})} \le \frac{C\gamma_{N(T_{n+1})}}{T_{n+1}} .$$
We now choose $\gamma_n = \gamma_1 n^{-2/3}$ so that $\gamma_{N(T_n)} \asymp n^{-2}$ and then
$$ (c^{\text{end}}) \le C e^{Ca_{n+1}^{-1}(1+|x|^2)} n^{-1} . $$
This way we obtain for every $x \in \mathbb{R}^d$:
\begin{align}
|\mathbb{E}f(X^{x,n}_{T_{n+1}-T_n}) - \mathbb{E}f(Y^x_{T_{n+1}-T_n, T_n})| & \le C\|f\|_\infty \underbrace{ a_{n+1}^{-3} e^{C_1 a_{n+1}^{-2}} (a_n - a_{n+1}) V(x) \rho_{n+1}^{-1} }_{=:v_{n+1}} e^{Ca_{n+1}^{-1}(1+|x|^2)}.
\end{align}
We integrate this inequality with respect to the laws of $X^{x_0}_{T_n}$ and $\bar{Y}_{T_n}^{x_0}$ and obtain, temporarily setting $x_n := X^{x_0}_{T_n}$ and $y_n := Y_{T_n}^{x_0}$ and using \cite[Lemma 6.1]{bras2021-2} and Lemma \ref{lemma:D.1a:exp},
\begin{align*}
\dtv(X^{x_0}_{T_{n+1}}, Y^{x_0}_{T_{n+1}}) & \le \dtv(X_{T_{n+1}-T_n}^{x_n,n}, X_{T_{n+1}-T_n}^{y_n,n}) + \dtv(X_{T_{n+1}-T_n}^{y_n,n}, Y_{T_{n+1}-T_n,T_n}^{\bar{y}_n}) \\
& \le \underbrace{Ca_{n+1}^{-1} e^{C_1 a_{n+1}^{-2}} e^{-\rho_{n+1} (T_{n+1}-T_n)}}_{:= \mu_{n+1}' = a_{n+1}^{-1} \mu_{n+1}} \dtv(X^{x_0}_{T_n}, Y^{x_0}_{T_n}) + \underbrace{Cv_{n+1} e^{Ca_{n+1}^{-1}(1+|x_0|^2)}}_{:=w_{n+1}},
\end{align*}
where $\mu_n$ is defined in \eqref{eq:def_mu}. Iterating this inequality yields
\begin{align*}
\dtv(X^{x_0}_{T_{n+1}}, Y^{x_0}_{T_{n+1}}) & \le C(w_{n+1} + \mu_{n+1}' w_n + \cdots + \mu_{n+1}' \cdots \mu_2' w_1) \le Cw_{n+1},
\end{align*}
where we used, since $A$ satisfies \eqref{eq:A_condition_Y}, that $\mu_n' = O(e^{-Cn^\eta})$ for some $\eta>0$ (see \cite[(5.5)]{bras2021-2}) and that $w_n$ is bounded as it converges to $0$. Moreover using Theorem \ref{thm:convergence_X} we have
\begin{equation}
\dtv(Y^{x_0}_{T_{n}}, \nu_{a_{n}}) \le \dtv(X^{x_0}_{T_{n}}, Y^{x_0}_{T_{n}}) + \dtv(X^{x_0}_{T_n}, \nu_{a_n}) \le \frac{Ce^{C\sqrt{\log(n)}(1+|x_0|^2)}}{n^{ 1-(\beta+1)(C_1+C_2)/A^2}} .
\end{equation}
Finally, let us bound $\dtv(X^{x_0}_t, Y^{x_0}_t)$ for any $t \in [T_n, T_{n+1}]$. If $t \in [T_n+t_0, T_{n+1}]$ then we can apply Theorem \ref{thm:contraction_dTV} and we proceed as in the end of \cite[Section 6.3]{bras2021-2}. If $t \in [T_n,T_n+t_0]$, then we consider another shifted time schedule $\bar{T}_n := C_{(T)}n^{1+\beta} + 2t_0$ such that
$$ \bigcup_{i=0}^\infty [T_n,T_n+t_0] \ \cap \ \bigcup_{i=0}^\infty [\bar{T}_n, \bar{T}_n+t_0] = \varnothing .$$
Making use of the new time schedule we obtain as before a bound on $\dtv(Y^{x_0}_t, \nu_{a(t)})$ for every $t \notin \bigcup_{i=0}^\infty [\bar{T}_n, \bar{T}_n+t_0]$. Since the time schedules $(T_n)$ and $(\bar{T}_n)$ are only tools for the proof of convergence of $Y_t$, we then obtain a bound on $\dtv(Y_t, \nu_{a(t)})$ for every $t \in \mathbb{R}^+$.
\end{proof}

\section{Convergence of the Euler-Maruyama scheme in total variation}
\label{sec:Y_bar_convergence}

We now consider $(\bar{Y}_n)$ as in \eqref{eq:def_Y_bar} with extended definition \eqref{eq:def_Y_bar:2}.

\subsection{Preliminary lemmas}

\begin{lemma}
\label{lemma:D.1a_bar:exp}
Let $\lambda \in \mathbb{R}^+$. There exists a constant $C >0$ such that for every $k \ge 0$, for every $u \in [\Gamma_k, \Gamma_{k+1})$ and for every $x \in \mathbb{R}^d$:
\begin{equation}
\sup_{n \ge k+1} \mathbb{E} \left[e^{\lambda |Y^x_{\Gamma_n-u,u}|^2}\right] \le Ce^{\lambda|x|^2} .
\end{equation}
\end{lemma}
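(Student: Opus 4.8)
The plan is to reproduce the argument sketched for Lemma~\ref{lemma:D.1a:exp} (equivalently, the Lyapunov-type argument of \cite[Lemma 7.1]{bras2021-2}), with the It\^o computation replaced by a one-step recursion along the Euler--Maruyama scheme~\eqref{eq:def_Y_bar:2}. Fix $k\ge 0$, $u\in[\Gamma_k,\Gamma_{k+1})$, $x\in\mathbb{R}^d$, and let $\mathcal{F}_m:=\sigma\big(W_s,\,s\le\Gamma_m;\,\zeta_1,\dots,\zeta_m\big)$. Set $\phi_m:=\mathbb{E}\big[e^{\lambda|\bar Y^x_{\Gamma_m-u,u}|^2}\big]$ for $m\ge k+1$. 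Since $\bar Y^x_{\cdot,u}$ leaves $u$ by a first step of length $\Gamma_{k+1}-u\le\gamma_{k+1}$ and then proceeds along the grid with steps $\gamma_{k+2},\gamma_{k+3},\dots$, it suffices to prove $\phi_m\le Ce^{\lambda|x|^2}$ for every $m\ge k+1$, with $C$ independent of $m$, $k$, $u$, $x$. (If the statement is instead read with the continuous process $Y^x_{\cdot,u}$ of~\eqref{eq:def_Y:2}, it is already contained in Lemma~\ref{lemma:D.1a:exp}.)

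First I would establish the one-step estimate. Writing $y:=\bar Y^x_{\Gamma_m-u,u}$, $\gamma:=\gamma_{m+1}$ (or $\Gamma_{k+1}-u$ at the first step) and $a:=a(\Gamma_m)$, one has $\bar Y^x_{\Gamma_{m+1}-u,u}=y+\gamma\big(b_a(y)+\zeta_{m+1}(y)\big)+a\sigma(y)(W_{\Gamma_{m+1}}-W_{\Gamma_m})$. Conditionally on $\mathcal{F}_m$ and then on $\zeta_{m+1}(y)$ the increment is Gaussian with covariance $\Sigma=a^2\gamma\,\sigma\sigma^\top(y)$, and $\|\Sigma\|\le a^2\gamma\|\sigma\|_\infty^2\to 0$ as $m\to\infty$, so $2\lambda\Sigma\prec I_d$ for every $m\ge m_0=m_0(\lambda)$; the Gaussian Laplace transform then gives
\[
\mathbb{E}\big[e^{\lambda|\bar Y^x_{\Gamma_{m+1}-u,u}|^2}\mid\mathcal{F}_m,\,\zeta_{m+1}(y)\big]\ \le\ (1+C\gamma)\,\exp\!\Big((\lambda+C\gamma)\,\big|y+\gamma\big(b_a(y)+\zeta_{m+1}(y)\big)\big|^2\Big),
\]
using $\det(I_d-2\lambda\Sigma)^{-1/2}\le e^{2\lambda\tr(\Sigma)}\le 1+C\gamma$ and $(I_d-2\lambda\Sigma)^{-1}\Sigma\preceq C\gamma\,I_d$. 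Expanding the square, using \eqref{Eq:eq:V_confluence} and the boundedness of $\sigma$ and $\Upsilon$ exactly as in the sketch of Lemma~\ref{lemma:D.1a:exp} to get $\langle y,b_a(y)\rangle\le -c|y|^2+C$ for $|y|\ge R$ (with $c>0$ depending only on $\alpha_0$, $\ubar{\sigma}_0$ and $\|\sigma\|_\infty$, up to an arbitrarily small loss), and then taking the expectation over $\zeta_{m+1}(y)$ — which is centred with $\mathbb{E}[|\zeta_{m+1}(y)|^p]\le C_pV^{p/2}(y)$ by \eqref{eq:zeta_assumptions}, so its contribution to the exponent is $O(\gamma)$ with at most polynomial dependence on $|y|$ and is absorbed as in \cite[Lemma 7.1]{bras2021-2} — one obtains, for $m\ge m_0$,
\[
\mathbb{E}\big[e^{\lambda|\bar Y^x_{\Gamma_{m+1}-u,u}|^2}\mid\mathcal{F}_m\big]\ \le\ (1-c'\gamma_{m+1})\,e^{\lambda|y|^2}\ \ \text{on }\{|y|\ge R\},
\]
for some $c'>0$, while on $\{|y|\le R\}$ the same computation yields the crude bound $\le C$. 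The key point is that the drift contribution $-2\lambda c\,\gamma|y|^2$ to the exponent dominates all the remaining $O(\gamma|y|^2)$ terms once $a(\Gamma_m)$ is small enough, which is precisely what forces $m\ge m_0$; combining the two regimes gives $\phi_{m+1}\le(1-c'\gamma_{m+1})\phi_m+C\gamma_{m+1}$ for $m\ge m_0$.

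Then I would iterate: from this recursion, $\sum_n\gamma_n=+\infty$ and $\prod_{j=m_0+1}^{m}(1-c'\gamma_j)\le e^{-c'(\Gamma_m-\Gamma_{m_0})}\le 1$, one gets $\phi_m\le\phi_{m_0}+C'$ for all $m\ge m_0$. It remains to bound the finitely many terms $\phi_{k+1},\dots,\phi_{m_0}$ (their number being $\le m_0$) by $Ce^{\lambda|x|^2}$: over this bounded number of initial steps the scheme is a bounded perturbation of a Gaussian, so an exponential-moment bound propagates step by step with the parameter $\lambda$ and the multiplicative prefactor inflated by a bounded amount at each step, as in \cite[Lemma 7.1]{bras2021-2}. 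This yields $\sup_{m\ge k+1}\phi_m\le Ce^{\lambda|x|^2}$, uniformly in $k$, $u$, $x$.

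The hard part will be the control of the gradient-noise term $\zeta_{m+1}(y)$, which is the only genuine novelty with respect to the continuous-time Lemma~\ref{lemma:D.1a:exp}: only polynomial moments of $\zeta$ are available (no exponential integrability), so its contribution to the exponential weight $e^{\lambda|\cdot|^2}$ cannot be treated by a direct Laplace transform and must instead be estimated after conditioning on $\zeta_{m+1}(y)$, exploiting that $\gamma_{m+1}\to 0$ makes the $\zeta$-part of the squared increment genuinely of lower order, exactly as in \cite[Lemma 7.1]{bras2021-2} and the proof of Lemma~\ref{lemma:D.1a:exp}. A secondary, harmless technical point is the uniformity in $k$ and $u$, and in particular the treatment of the first, possibly shortened, step of length $\Gamma_{k+1}-u\le\gamma_{k+1}$, for which the one-step estimate above applies verbatim.
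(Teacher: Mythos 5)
Your proposal is correct and follows essentially the same route as the paper, whose own proof of this lemma consists only of the remark that the argument of Lemma \ref{lemma:D.1a:exp} carries over, citing \cite[Lemma 7.1]{bras2021-2} for the discrete-time adaptation; your one-step Gaussian Laplace-transform recursion combined with the dissipativity of $b_a$, the absorption of the $\zeta$-term, and the separate treatment of the finitely many initial steps is precisely that adaptation. You also correctly read the statement as concerning the Euler scheme $\bar Y$ (the missing bar in the displayed formula is a typo in the paper).
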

\begin{proof}
The prove is the same as for Lemma \ref{lemma:D.1a:exp}. For the adaptation to discrete time, we refer to the proof of \cite[Lemma 7.1]{bras2021-2}.
\end{proof}

\begin{proposition}
\label{prop:3.6:Y_bar_dTV}
Let $T >0$. There exists $C > 0$ such that for every Lipschitz continuous function $f$ and every $t \in (0,T]$, for all $n \ge 0$, for all $\gamma$ such that $\Gamma_k \in [T_n,T_{n+1}]$, $\gamma \le \gamma_{k+1}$ and $\Gamma_k+t+\gamma \in [T_n,T_{n+1}]$,
\begin{align}
& \left| \mathbb{E}\left[P_t f(\bar{Y}_{\gamma,\Gamma_k}^x)\right]  - \mathbb{E}\left[P_t f(X_{\gamma}^{x,n})\right] \right| \nonumber \\
& \quad \le C\|f\|_\infty V^{2}(x) \left(a_{n+1}^{-2}t^{-1}\left(\gamma^2 {+} (a(\Gamma_k){-}a_{n+1}) \gamma\right) + a_{n+1}^{-3}t^{-3/2}\left(\gamma^2 {+} \gamma^{3/2}(a(\Gamma_k)-a_{n+1})\right) \right).
\end{align}
\end{proposition}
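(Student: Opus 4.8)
The plan is to compare the one-step-shifted kernel $P^{\bar{Y}}_{\gamma,\Gamma_k}$ with the plateau kernel $P^{X,n}_\gamma$ by inserting $P^X_t$ (the homogeneous plateau semigroup at level $a_{n+1}$) against both and using the smoothing of $g_t := P^X_t f$. Since $\Gamma_k, \Gamma_k+t+\gamma \in [T_n,T_{n+1}]$, on the relevant time window $X$ has constant coefficients $b_{a_{n+1}}, a_{n+1}\sigma$, so $g_t$ is a genuine function to which the regularity estimates of \cite[Proposition 3.2]{pages2020} apply: $g_t$ is differentiable with $\|\nabla g_t\|$ controlled by $\|f\|_\infty a_{n+1}^{-1} t^{-1/2}$-type bounds and $\|\nabla^2 g_t\|$ by $\|f\|_\infty a_{n+1}^{-2} t^{-1}$-type bounds (with the usual $V^{1/2}$-weights coming from the non-bounded drift). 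The quantity to estimate is then $|\mathbb{E} g_t(\bar{Y}^x_{\gamma,\Gamma_k}) - \mathbb{E} g_t(X^{x,n}_\gamma)|$.

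First I would recall that $\bar{Y}^x_{\gamma,\Gamma_k}$ is, by \eqref{eq:def_Y_bar:2}, a \emph{single} Euler step with drift coefficient $b_{a(\Gamma_k)}(x)+\zeta_{k+1}(x)$ and diffusion coefficient $a(\Gamma_k)\sigma(x)$, frozen at $x$, whereas $X^{x,n}_\gamma$ is the genuine diffusion with drift $b_{a_{n+1}}$ and diffusion $a_{n+1}\sigma$ started at $x$. So the error splits into three effects: (i) freezing the coefficients of the diffusion at $x$ over $[0,\gamma]$ (the standard weak-error-of-Euler term, of size $\gamma^2 t^{-1}$ type after integration against $g_t$, via a second-order Taylor/Itô expansion as in \cite[Proposition 7.4]{bras2021-2}); (ii) replacing $b_{a_{n+1}}$ by $b_{a(\Gamma_k)}$, which costs $|b_{a_{n+1}}(x)-b_{a(\Gamma_k)}(x)| \le C(a_{n+1}^2 - a(\Gamma_k)^2)|\Upsilon(x)| \le C(a(\Gamma_k)-a_{n+1})$ times $\gamma$ times $\|\nabla g_t\|$, i.e. the $a_{n+1}^{-2}t^{-1}(a(\Gamma_k)-a_{n+1})\gamma$ contribution plus a $t^{-3/2}$ analogue from expanding $\nabla g_t$ once more; (iii) replacing $a_{n+1}\sigma$ by $a(\Gamma_k)\sigma$ in the Gaussian part, whose contribution when integrated against a $\mathcal{C}^2$ function with the stated Hessian bound is of order $(a(\Gamma_k)^2-a_{n+1}^2)\gamma \cdot \|\nabla^2 g_t\| \asymp a_{n+1}^{-2}t^{-1}(a(\Gamma_k)-a_{n+1})\gamma$, with the $\gamma^{3/2}$-cross-terms coming from the joint expansion in the drift-shift and the noise. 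The $\zeta_{k+1}(x)$ term has mean zero and is independent of everything relevant, so after conditioning it contributes only through $\mathbb{E}[|\zeta|^p]\le C_p V^{p/2}(x)$ and is absorbed into the $V^2(x)$ prefactor together with the $V^{1/2}$-weights of the derivative bounds and the moment bounds of Lemma \ref{lemma:D.1a_bar:exp} and \cite[Lemma 7.1]{bras2021-2}. I would organize (i)--(iii) by a single Itô expansion of $t \mapsto g_t(\cdot)$ composed with a Taylor expansion in the spatial variable, collecting the $\gamma^2, \gamma(a(\Gamma_k)-a_{n+1}), \gamma^{3/2}(a(\Gamma_k)-a_{n+1})$ orders and discarding higher-order terms using $\gamma \le \gamma_{k+1} \le \bar\gamma$ and $a(\Gamma_k)-a_{n+1} \le a_n - a_{n+1}$ bounded.

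The cleanest route, and the one matching the reference, is to invoke \cite[Proposition 6.4]{bras2021-2} (or its discrete-time analogue behind \cite[Proposition 6.5]{bras2021-2}) applied to $g_t = P^X_t f$, so that the whole computation reduces to bounding the functional $\Phi_{g_t}$ governing the second-order remainder, exactly as in Proposition \ref{prop:3.6:Y_dTV} but now keeping the full $\gamma^2$-order term (since $\bar Y$ is a discretization, not the exact SDE) and keeping track that $g_t$ only has $t^{-1}$ (Hessian) and $t^{-3/2}$ (third-derivative, appearing through the remainder of the Taylor expansion of $\nabla g_t$) singularities with $V$-weights of order $V^{1/2}$ each, which after the Cauchy--Schwarz steps and the exponential moment controls produce the announced $V^2(x)$ prefactor. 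The main obstacle is the bookkeeping of the mixed $t$-power / $a_{n+1}$-power / $V$-power / $\gamma$-power terms: one must verify that no term worse than $a_{n+1}^{-3} t^{-3/2}$ or with a $V$-power exceeding $2$ survives, and in particular that the stochastic-gradient noise $\zeta_{k+1}(x)$, which enters the drift linearly in $\gamma$, does not degrade the $t$-power (it does not, because its mean-zero property kills the first-order-in-$\zeta$ term against $\nabla g_t$, leaving a second-order-in-$\zeta$ term against $\nabla^2 g_t$ of order $\gamma^2 t^{-1}$ with a $V(x)$ factor). Everything else is a direct adaptation of \cite[Proposition 6.5]{bras2021-2} and I would conclude by citing it.
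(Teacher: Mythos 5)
Your proposal matches the paper's argument: the paper proves this by applying the one-step weak-error comparison of \cite[Proposition 7.3]{bras2021-2} (the discrete-time analogue of \cite[Proposition 6.4]{bras2021-2} that you identify) to $g_t = P_t^{X,n}f$, using the derivative bounds on $g_t$ from \cite[Proposition 3.2]{pages2020} exactly as in Proposition \ref{prop:3.6:Y_dTV}, and noting the same improvement of $(a_n-a_{n+1})$ to $(a(\Gamma_k)-a_{n+1})$. Your more detailed bookkeeping of the $\gamma$-, $t$- and $V$-powers and of the mean-zero role of $\zeta_{k+1}$ is consistent with, and fills in, what the paper delegates to the companion reference.
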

\begin{proof}
The proof is the same as the proof of Proposition \ref{prop:3.6:Y_dTV}, using \cite[Proposition 7.3]{bras2021-2}. We also remark that we can directly improve the bound in $(a_n-a_{n+1})$ into $(a(\Gamma_k)-a_{n+1})$.
\end{proof}

\subsection{Proof of Theorem \ref{thm:main}(b)}

More precisely, we prove that for all $\beta > 0$, if $\sigma \in \mathcal{C}^{2r}_b$ and if
\begin{equation}
\label{eq:A_condition_Y_bar}
A > \max\left(\sqrt{(\beta+1)(2C_1+C_2)}, \sqrt{(1+\beta^{-1})C_2} \right)
\end{equation}
and if $A$ is large enough so that
\begin{equation}
n^{(\beta+1)C_1/A^2} \gamma_{N(T_n)}^{r/(2r+1)} \underset{n \to \infty}{\longrightarrow} 0,
\end{equation}
then
\begin{equation}
\dtv(\bar{Y}^{x_0}_t, \nu_{a(t)}) \le C \left( \frac{\log^{1/2}(t)\max\left[V^2(x_0),1+|x_0|\right]}{t^{(\beta+1)^{-1}-(2C_1+C_2)/A^2} } + e^{C\sqrt{\log(t)} (1+|x_0|^2)} t^{C_1/A^2} \gamma_{Ct}^{r/(2r+1)} \right).
\end{equation}

%We now follow the lines of Section \ref{subsec:proof_Y_bar}. We use Corollary \ref{cor:gibbs} for the total variation, which yields a supplementary factor $a_{n+1}^{-1}$ and we use Proposition \ref{prop:3.6:Y_bar_dTV} instead of Proposition \ref{prop:3.6:Y:bar}, yielding $t^{-3/2}$ instead of $t^{-1}$. For the term $(b)$, we write ???
\begin{proof}
We still follow the proof of \cite[Theorem 2.1(b)]{bras2021-2} in \cite[Section 7.3]{bras2021-2} based on a domino strategy, using Theorem \ref{thm:contraction_dTV} in place of \cite[Theorem 4.2]{bras2021-2} and Proposition \ref{prop:3.6:Y_bar_dTV} in place of \cite[Proposition 7.4]{bras2021-2}. Let $n \ge 0$, for $f:\mathbb{R}^d \to \mathbb{R}$ bounded measurable, we split $|\mathbb{E}f(X^{x,n}_{T_{n+1}-T_n}) - \mathbb{E}f(\bar{Y}^x_{T_{n+1}-T_n, T_n})|$ into four terms $(c^{\text{init}})$, $(a)$, $(b)$, $(c^{\text{end}})$.

Using Theorem \ref{thm:contraction_dTV}, \cite[Lemma 7.1]{bras2021-2} and Proposition \ref{prop:3.6:Y_bar_dTV} we get as in \cite[Section 7.3]{bras2021-2}:
\begin{align*}
& (a) \le Ca_{n+1}^{-4} e^{C_1 a_{n+1}^{-2}} \|f\|_\infty (a_n-a_{n+1}) V^2(x) \rho_{n+1}^{-1} . \\
& (c^{\text{init}}) \le  Ca_{n+1}^{-4} e^{C_1 a_{n+1}^{-2}} e^{-\rho_n(T_{n+1}-T_n)} \|f\|_\infty (a_n-a_{n+1}) \gamma_{N(T_n)+1} V^2(x).
\end{align*}

Using Proposition \ref{prop:3.6:Y_bar_dTV} and \cite[Lemma 7.1]{bras2021-2}, we obtain
\begin{align*}
& (b) \le C a_{n+1}^{-3} \left(\gamma_{N(T_{n+1}-T)} {+} \sqrt{\gamma_{N(T_{n+1}-T)}}(a_n {-} a_{n+1})\right) \|f\|_\infty V^2(x) \sum_{k=N(T_{n+1}-T)+1}^{N(T_{n+1})-1} \frac{\gamma_k}{(T_{n+1}{-}\Gamma_k)^{3/2}} \\
& \quad + C a_{n+1}^{-2} \left( \sum_{k=N(T_{n+1}-T)+1}^{N(T_{n+1})-1} \frac{\gamma_{N(T_{n+1}-T)} \gamma_k}{T_{n+1}-\Gamma_k} + \sum_{k=N(T_{n+1}-T)+1}^{N(T_{n+1})-1} \frac{\gamma_k (a(\Gamma_k){-}a_{n+1})}{T_{n+1}-\Gamma_k} \right) \|f\|_\infty V^2(x).
\end{align*}
But we remark that
$$ a(\Gamma_k) - a_{n+1} = a(\Gamma_k) - a(T_{n+1}) \le C\frac{da}{dt}(T_{n+1}) \cdot (\Gamma_k - T_{n+1}) \le \frac{C(\Gamma_k-T_{n+1})}{T_{n+1} \log^{3/2}(T_{n+1})} $$
and then
\begin{align*}
(b) & \le C a_{n+1}^{-3} \left(\gamma_{N(T_{n+1}-T)} + \sqrt{\gamma_{N(T_{n+1}-T)}}(a_n {-} a_{n+1})\right) \|f\|_\infty V^2(x) \int_{T_{n+1}-T}^{T_{n+1}-\gamma_{N(T_{n+1})}} \frac{du}{(T_{n+1} {-} u)^{3/2}} \\
& \quad + C a_{n+1}^{-2} \left(\gamma_{N(T_{n+1}-T)}\int_{T_{n+1}-T}^{T_{n+1}-\gamma_{N(T_{n+1})}} \frac{du}{T_{n+1} {-} u} + \frac{1}{T_{n+1}} \int_{T_{n+1}-T}^{T_{n+1}-\gamma_{N(T_{n+1})}} du \right) \|f\|_\infty V^2(x) \\
& \le C a_{n+1}^{-3} \left(\gamma_{N(T_{n+1}-T)} + \sqrt{\gamma_{N(T_{n+1}-T)}}(a_n-a_{n+1})\right) \|f\|_\infty V^2(x) \gamma_{N(T_{n+1})}^{-1/2} \\
& \quad + C a_{n+1}^{-2} \left(\gamma_{N(T_{n+1})} |\log(\gamma_{N(T_{n+1})})| + T_{n+1}^{-1}\right) \|f\|_\infty V^2(x) \\
& \le Ca_{n+1}^{-3} \left(\gamma_{N(T_{n+1})}^{1/2} + (a_n-a_{n+1}) \right) \|f\|_\infty V^2(x).
\end{align*}
Applying Proposition \ref{prop:dtv_X_Y_bar_st} along with Lemma \ref{lemma:D.1a_bar:exp} yields
\begin{align*}
(c^{\text{end}}) \le C \|f\|_\infty \left( e^{Ca_{n+1}^{-1} (1+|x|^2)} \gamma_{N(T_{n+1})}^{r/(2r+1)} + a_n^{-2}(a_n-a_{n+1}) \right).
\end{align*}
We finally obtain for every $x \in \mathbb{R}^d$:
\begin{align*}
|\mathbb{E}f(X^{x,n}_{T_{n+1}-T_n}) {-} \mathbb{E}f(\bar{Y}^x_{T_{n{+}1}-T_n, T_n})| & \le C \|f\|_\infty \big(  a_{n{+}1}^{-4} e^{C_1 a_{n{+}1}^{-2}} (a_n{-}a_{n{+}1}) V^2(x) \rho_{n{+}1}^{-1} {+} e^{Ca_{n{+}1}^{-1} (1{+}|x|^2)} \gamma_{N(T_{n{+}1})}^{r/(2r{+}1)} \big).
\end{align*}
The same way as in Section \ref{subsec:proof_Y} we get
\begin{align*}
\dtv(\bar{Y}^{x_0}_{T_{n+1}}, \nu_{a_{n+1}}) \le C \left(  a_{n+1}^{-4} e^{C_1 a_{n+1}^{-2}} (a_n{-}a_{n+1}) \max\left[V^2(x_0),1+|x_0|\right] \rho_{n+1}^{-1} + e^{Ca_{n+1}^{-1} (1+|x_0|^2)} \gamma_{N(T_{n+1})}^{r/(2r+1)} \right)
\end{align*}
and, for $t \in [T_n,T_{n+1}]$,
\begin{align*}
\dtv(\bar{Y}^{x_0}_{t}, \nu_{a(t)}) \le C e^{C_1 a_{n+1}^{-2}} \left( a_{n+1}^{-4} e^{C_1 a_{n+1}^{-2}} (a_n{-}a_{n+1}) \max\left[V^2(x_0),1+|x_0|\right] \rho_{n+1}^{-1} {+} e^{Ca_{n+1}^{-1} (1+|x_0|^2)} \gamma_{N(T_{n+1})}^{r/(2r+1)} \right).
\end{align*}

\end{proof}

\appendix

\section{Appendix}

%\subsection{Auxiliary lemmas}

%\begin{lemma}[\cite{friedman}, Chapter 9, Lemma 7]
%\label{lemma:friedman_Lemma_7}
%For $a>0$, $0 < u < t \le T$, $x \in \mathbb{R}^d$, $\xi \in \mathbb{R}^d$, let
%\begin{align*}
%& I_a := \int_{\mathbb{R}^d} \frac{1}{(u(t-u))^{d/2}} \exp\left(-a \left(\frac{|x-y|^2}{t-u} + \frac{|y-\xi|^2}{u} \right) \right) dy.
%\end{align*}
%Then there exists a constant $C>0$ depending only on $d$ and $T$ such that for every $ 0 < \varepsilon < 1$,
%$$ I_a \le \frac{C}{(\varepsilon a t)^{d/2}} \exp \left(-a(1-\varepsilon) \frac{|x-\xi|^2}{t} \right) .$$
%\end{lemma}

\subsection{Proof of Proposition \ref{prop:dtv_nu_an}}
\label{sec:app_proof_dtv}

\begin{proof}
We use the characterization of the total variation distance as the $L^1$-distance between the densities, which reads
\begin{align*}
& \dtv(\nu_{a_n},\nu_{a_{n+1}}) = \int_{\mathbb{R}^d} \left| \mathcal{Z}_{a_n} e^{-2(V(x)-V^\star)/a_n^2} - \mathcal{Z}_{a_{n+1}} e^{-2(V(x)-V^\star)/a_{n+1}^2} \right| dx \\
& \quad \le \mathcal{Z}_{a_{n+1}} \int_{\mathbb{R}^d} \left|e^{-2(V(x)-V^\star)/a_n^2} - e^{-2(V(x)-V^\star)/a_{n+1}^2} \right| dx + |\mathcal{Z}_{a_n} - \mathcal{Z}_{a_{n+1}}| \int_{\mathbb{R}^d} e^{-2(V(x)-V^\star)/a_n^2} dx \\
& \quad = \mathcal{Z}_{a_{n+1}}a_{n+1}^d \int_{\mathbb{R}^d} \left|e^{-2(V(a_{n+1}x)-V^\star)/a_n^2} - e^{-2(V(a_{n+1}x)-V^\star)/a_{n+1}^2} \right| dx \\
& \quad \quad + \left|1 - \frac{\mathcal{Z}_{a_n}}{\mathcal{Z}_{a_{n+1}}}\right| \mathcal{Z}_{a_{n+1}} a_n^d \int_{\mathbb{R}^d} e^{-2(V(a_n x)-V^\star)/a_n^2} dx.
\end{align*}
Using \cite[(B.3)]{bras2021-2} and \cite[(B.5)]{bras2021-2}, the first term is bounded by
\begin{align*}
C\frac{a_n-a_{n+1}}{a_n} \int_{\mathbb{R}^d} e^{-2(V(a_{n+1}y)-V^\star)/a_{n}^2} \frac{V(a_{n+1}y)-V^\star}{a_n^2} dx \le C\frac{a_n-a_{n+1}}{a_n},
\end{align*}
because the integral converges by dominated convergence as for the proof of \cite[(B.3)]{bras2021-2}.
Using \cite[(B.3)]{bras2021-2} and \cite[(B.4)]{bras2021-2}, the second term is bounded by $C(n\log(n))^{-1}$.
\end{proof}

%\bibliographystyle{alpha}
%\bibliography{../langevin-multiplicative-noise/biblio}

\newcommand{\etalchar}[1]{$^{#1}$}

\end{document}